\theoremstyle{plain}
\newcommand{\cleqn}{\setcounter{equation}{0}}
\newcommand{\clth}{\setcounter{theorem}{0}}
\newcommand {\sectionnew}[1]{\section{#1}\cleqn\clth}
\newcommand{\nn}{\hfill\nonumber}
\newtheorem{theorem}{Theorem}[section]
\newtheorem{lemma}[theorem]{Lemma}
\newtheorem{definition-theorem}[theorem]{Definition-Theorem}
\newtheorem{proposition}[theorem]{Proposition}
\newtheorem{corollary}[theorem]{Corollary}
\newtheorem{definition}[theorem]{Definition}
\newtheorem{example}[theorem]{Example}
\newtheorem{remark}[theorem]{Remark}
\newtheorem{conjecture}[theorem]{Conjecture}
\newtheorem{notation}[theorem]{Notation}
\newcommand \bth[1] { \begin{theorem}\label{#1} }
\newcommand \ble[1] { \begin{lemma}\label{#1} }
\newcommand \bpr[1] { \begin{proposition}\label{#1} }
\newcommand \bco[1] { \begin{corollary}\label{#1} }
\newcommand \bde[1] { \begin{definition}\label{#1}\rm }
\newcommand \bex[1] { \begin{example}\label{#1}\rm }
\newcommand \bre[1] { \begin{remark}\label{#1}\rm }
\newcommand \bcj[1] { \begin{conjecture}\label{#1}\rm }
\newcommand \bnota[1] { \begin{notation}\label{#1}\rm }
\renewcommand {\eth} { \end{theorem} }
\newcommand {\ele} { \end{lemma} }
\newcommand {\epr} { \end{proposition} }
\newcommand {\eco} { \end{corollary} }
\newcommand {\ede} { \end{definition} }
\newcommand {\eex} { \end{example} }
\newcommand {\ere} { \end{remark} }
\newcommand {\ecj} { \end{conjecture} }
\newcommand {\enota} { \end{notation} }
\def \Cset {{\mathbb C}}
\def \Zset {{\mathbb Z}}
\def \Aset {{\mathbb A}}
\def \Nset {{\mathbb N}}
\def \Pset {{\mathbb P}}
\def \cp   {{\Cset \Pset^1}}
\def \OO {{\mathcal{O}}}
\def \mt  {\mapsto}
\def \ol {\overline}
\def \wt {\widetilde}
\def \id { {\mathrm{id}} }
\def \Ad { {\mathrm{Ad}} }
\def \Lie { {\mathrm{Lie \,}} }
\def \g  {\mathfrak{g}}   
\def \sl {\mathfrak{sl}} 
\def \h  {\mathfrak{h}}
\def \l {\mathfrak{l}}
\def \sl {\mathfrak{sl}}
\DeclareMathOperator \GCR { {\mathcal{ECR}} }
\DeclareMathOperator \Span { {\mathrm{Span}} }
\DeclareMathOperator \Aut { {\mathrm{Aut}} }
\DeclareMathOperator \redd { {\mathrm{red}}}
\DeclareMathOperator \ad { {\mathrm{ad}} }
\DeclareMathOperator \Ker { {\mathrm{Ker}} }
\renewcommand \max { {\mathrm{max}} }
\newcommand{\Rvw}{\overline{\mathcal{R}_v^w}}
\newcommand{\oRvw}{\mathcal{R}_v^w}
\begin{document}
\title
{The Poisson degeneracy locus of a Flag Variety}
\author[\'Elie Casbi]{\'Elie Casbi}
\address{
Department of Mathematics \\
Northeastern University \\
Boston, MA 02115 \\
U.S.A.
}
\email{e.casbi@northeastern.edu, masoomi.a@northeastern.edu, m.yakimov@northeastern.edu}
\author[Aria Masoomi]{Aria Masoomi}
\author[Milen Yakimov]{Milen Yakimov}
\date{}
\keywords{Degeneracy loci of Fano Poisson varieties, flag varieties, Richardson varieties, Kostant's cascades of roots, reflective lengths of Weyl group elements}
\subjclass[2020]{Primary 14M15; Secondary 53D17, 14L35, 05E18}
\begin{abstract} We present a comprehensive study of the degeneracy loci of the full flag varieties of all complex semisimple Lie groups equipped with the standard Poisson structures. The reduced Poisson degeneracy loci are shown to stratify under the action of the canonical maximal torus into open Richardson varieties $\mathcal{R}_v^w$ for pairs of Weyl group elements $v \leq w$ that extend the covering relation of the Bruhat order. Four different combinatorial descriptions of those pairs are given, and it is shown that their Bruhat intervals are power sets. The corresponding closed Richardson varieties $\overline{\mathcal{R}_v^m}$ are shown to be isomorphic to $(\mathbb{C}\mathbb{P}^1)^d$ for $d \geq 0$ in a  compatible way with the stratification. As a consequence, we obtain that the reduced Poisson degeneracy loci of all full flag varieties are connected, and all of their irreducible components are isomorphic to $(\mathbb{C}\mathbb{P}^1)^n$ for some $n \geq 0$; they are not equidimensional in general. Using the framework of projected Richardson varieties, these results are extended to all partial flag varieties. The top dimension of irreducible components of the reduced Poisson degeneracy locus in the full flag case 
is proved to be equal to 
the cardinality of Kostant's cascade of roots and the reflective length of the longest Weyl group element. It is shown that the Poisson degeneracy loci of flag varieties are not reduced in general. 
\end{abstract}
\maketitle
\sectionnew{Introduction}
\label{sec:intro}
\subsection{The degeneracy loci of Poisson schemes}
Let $X$ be a complex Poisson scheme, $\OO_X$ be its structure sheaf and $\Omega_X:=\Omega_{X/\Cset}$ be its sheaf of K\"ahler differentials. The Poisson bracket on the structure sheaf 
gives rise to a morphism of $\OO_X$-modules 
\[
\pi : \wedge^2 \Omega_X \to \OO_X.
\]
The {\em{$k$-th Poisson degeneracy locus}} $D_{2k}(X)$ of $X$ is the closed subcheme whose ideal sheaf is the image of the corresponding 
morphism of $\OO_X$-modules
\begin{equation}
\label{pik}
\pi^{k+1} : \wedge^{2k +2} \Omega_X \to \OO_X.
\end{equation}
If $X$ is smooth, then the Poisson structure is given by a bivector field 
\[
\pi \in H^0(X, \wedge^2 T_X)
\]
and the map \eqref{pik} is given by the contraction of $\wedge^{k+1} \pi$ with differential forms. In this situation, we will denote by $D_{2k} (X, \pi)$
the $2k$-th degeneracy locus and by $D_{2k}(X, \pi)_{\redd}$ the corresponding reduced subscheme of $X$, which we will refer to as to the 
{\em{reduced Poisson degeneracy locus}}. The latter equals the union of symplectic leaves of $(X, \pi)$ of dimension $\leq 2k$. 
Both $D_{2k}(X, \pi)$ and $D_{2k}(X, \pi)_{\redd}$ are Poisson subschemes of $(X, \pi)$, \cite{P}; 
we refer the reader to \cite{P} and \cite[Sect. 2]{GP} for background on Poisson schemes and to \cite{Pym} for complex analytic aspects of Poisson geometry. We will call $D_0(X, \pi)$ and 
$D_0(X, \pi)_{\redd}$ simply the degeneracy locus and the reduced Poisson degeneracy locus of $(X, \pi)$. 

Bondal conjectured that for all Fano Poisson varieties $(X, \pi)$, $D_{2k}(X, \pi)_{\redd}$ is nonempty and has a component of dimension $\geq 2k+1$ for all $k < \dim X/2$. 
Polishchuk proved that, if generically 
$\pi$ has rank $2n$, then the conjecture holds for $k=n-1$, see also \cite{Beauville}. Gualtieri and Pym \cite{GP} proved the conjecture for 4-folds, and more generally showed that, if $\dim X = 2n$, then either 
$D_{2n-2}(X, \pi)_{\redd}= X$ or is a (nonempty) hypersurface and that Bondal's conjecture holds for $k =n-2$. 
\subsection{Poisson homogeneous spaces, symplectic leaves and Poisson degeneracy loci}
Currently, there is little known about the exact structure of degeneracy loci of Poisson varieties. 
The Poisson degeneracy loci of the Fe\u{\i}gin--Odesski\u{\i} Poisson structures \cite{FO} on projective spaces were related 
to secant varieties and were conjectured to be reduced \cite[Sect. 8]{GP}. 
Results of this sort about the explicit form of $D_{2k}(X, \pi)$ for concrete Poisson varieties $(X, \pi)$ 
(rather than just dimension formulas for irreducible components) are rare.  

The goal of this paper is to initiate a study of the explicit structure of the degeneracy loci of Poisson varieties that play a 
role in Lie theory and quantum groups. After the fundamental work of Drinfeld \cite{D}, {\em{Poisson homogeneous spaces}}
developed into a fundamental approach to the representation theory of quantum groups and their actions on algebras. These are homogenous 
spaces $M$ of Lie groups $G$ equipped with Poisson structures $\pi$ and $\Pi$, respectively, such that the maps 
\[
(G, \Pi) \times (G, \Pi) \to (G, \Pi) 
\quad \mbox{and} \quad (G, \Pi) \times (M, \pi) \to (M, \pi)
\]
are Poisson; the first condition makes the pair $(G, \pi)$ a Poisson--Lie group and the second makes $(M, \pi)$ 
a Poisson homogeneous space of it. Drinfeld \cite{D} proved that Poisson homogeneous spaces are classified in terms 
of Lagrangian subalgebras of certain quadratic Lie algebras. Evens and Lu \cite{EL1,EL2} used this to show that Poisson homogeneous 
spaces have natural embeddings in varieties of Lagrangian subalgebras, which allows for the former to be studied in families 
from a unified stand point. A general approach to the classification of symplectic leaves in Poisson--Lie groups and Poisson homogeneous 
spaces via dressing orbits was found by Semenov-Tian-Shansky \cite{STS} and Karolinsky \cite{Karolinsky}. Symplectic foliations were explicitly 
described for all Belavin--Drinfeld Poisson structures \cite{Y-Duke}, standard Poisson structures 
on symmetric spaces and flag varieties \cite{BGY,FL,GY}, wonderful compactifications \cite{EL2}, and other situations. 
A general method for constructing partitions into regular Poisson submanifolds that encompasses all of the above classes was developed in 
\cite{LY}. 

However, these sets of results only determine Poisson degeneracy loci set theoretically. 
They do not say much about the geometry of these subschemes. 
\subsection{Results} In this paper we carry out a comprehensive study of the (0-th) Poisson degeneracy locus of the simplest and most important 
example of Fano varieties in the list of Poisson homogeneous spaces, namely the full flag varieties $G/B_+$ 
of all complex semisimple Lie groups $G$ equipped with the standard Poisson structure $\pi$ associated to a choice of opposite 
Borel subgroups $B_\pm$, cf. \eqref{eq:pi}. Our first main result is as follows:
\medskip

\noindent
{\bf{Theorem A.}} {\em{For every complex semisimple Lie group $G$, the reduced Poisson degeneracy locus $D_0(G/B_+, \pi)_{\redd}$ is stratified into the disjoint union of all open Richardson varieties 
\[
\oRvw = B_- v B_+/B_+ \cap B_+ w B_+/B_+ \subseteq G/B_+
\] 
for pairs of Weyl group elements $v \leq w \in W$ with $d := l(w) - l(v)$ satisfying any of the following four equivalent conditions:}}
\begin{enumerate}
    \item {\em{$\dim\Ker(vw^{-1}+1) = l(w) - l(v)$.}}
    \item {\em{$(v w^{-1})^2 =1$ and $l_\Delta(v w^{-1}) = l(w) - l(v)$ where $l_\Delta$ denotes the reflection length on the Weyl group of $G$, see Section \ref{sec:background-Lie}.}} 
    \item {\em{$v = s_{\gamma_1} \dots s_{\gamma_d} w$ for some positive roots $\gamma_1, \ldots, \gamma_d \in \Delta_+$ such that $\gamma_j \bot \gamma_k, \forall j \neq k$.}}
    \item {\em{For one, and thus for each reduced word $(i_1, \ldots, i_{l(w)})$ of $w$, there exits a reduced subword $(i_1, \ldots , \widehat{i}_{p_1} , \ldots , \widehat{i}_{p_d} , \ldots , i_l)$ with value $v$ 
    such that $\beta_{p_{j}} \bot \beta_{p_{k}}$, $\forall j \neq k$ for the roots $\{\beta_1, \ldots, \beta_{l(w)} \} = \Delta_+ \cap w(- \Delta_+)$ given by \eqref{eq:beta-k}.}}
\end{enumerate}
\medskip

Denote by $\GCR(W)$ the pairs of Weyl group elements $v \leq w$ satisfying any of the four equivalent conditions in the theorem. This set is an extension of the covering relation $\lessdot$ 
of the Bruhat order on $W$ in the sense that 
\[
(w, w) \in \GCR(W) \quad \mbox{and} \quad (v,w) \in \GCR(W), \; \; \forall v \lessdot w.
\]
Condition (4) in Theorem A appears in the work of Heckeberger--Kolb \cite{HK} on the classification of characters of the quantized coordinate rings \cite{DKP} of the Schubert cells of $G/B_+$ 
based on the classification of their prime ideals in \cite{Y-PLMS}. 

The Poisson structure $\pi$ is invariant under the maximal torus $T:=B_+ \cap B_-$. We prove that each $\oRvw$ is a single $T$-orbit for $(v,w) \in \GCR(W)$, i.e., {\em{the $T$-orbit stratification of the reduced Poisson degeneracy 
locus $D_0(G/B_+, \pi)_{\redd}$ is given by}}
\[
D_0(G/B_+, \pi)_{\redd} = \bigsqcup_{(v,w) \in \GCR(W)} \oRvw.
\] 
It is not true that the action of $T$ on $\oRvw$ is transitive (or equivalently that $\Rvw$ is a toric variety) only if $(v,w) \in \GCR(W)$, see Remark \ref{r:T-act}.

The pairs in $\GCR(W)$ have remarkable geometric and combinatorial properties obtained in our next main result:
\medskip

\noindent
{\bf{Theorem B.}} {\em{For all $(v,w) \in \GCR(W)$ the following hold:}}
\begin{enumerate}
    \item[(i)] {\em{The closed Richardson variety $\Rvw$ is isomorphic to a power of $\cp$
    \[
     \Rvw \simeq (\cp)^{l(w)-l(v)}.
     \] 
     }}
     \item[(ii)] {\em{The Bruhat interval $[v,w]$ is isomorphic to the power set of $\{1, \ldots, l(w) - l(v)\}$.}} 
    \item[(iii)] {\em{There exists only one Deodhar stratum \cite{Deodhar} in the open Richardson variety $\oRvw$, which is the unique open stratum $(\Cset^\times)^{l(w)-l(v)}$.}}
    \item[(iv)] {\em{The corresponding Kazhdan--Lusztig $R$-polynomial is 
    \[
    R_{v,w}(q) = (q-1)^{l(w)-l(v)}.
    \]
    }}
\end{enumerate}
\medskip

The special case of Theorem B(i) for the pairs of the form $(1,w) \in \GCR(W)$ was proved by Ohn in \cite{Ohn}.

The results in Theorem B are extended to the case of partial flag varieties in Theorem \ref{t:kappaP-isom}, where the statements are in terms of the Knutson--Lam--Speyer 
projected Richardson varieties \cite{KLS-C,KLS}. The results of Theorem A also hold for partial flag varieties due to Corollary \ref{c:GmodP}. 

Our third main result describes the geometry of the Poisson degeneracy loci of flag varieties:
\medskip

\noindent
{\bf{Theorem C.}} {\em{The following hold for all complex semisimple Lie groups $G$:}}
\begin{enumerate}
\item[(i)] {\em{The reduced Poisson degeneracy locus $D_0(G/B_+, \pi)_{\redd}$ is connected and its irreducible components are isomorphic to $(\cp)^{l(w) - l(v)}$ for the maximal elements of the poset $\GCR(W)$ 
with the partial order $(v', w') \leq (v, w)$ if $v \leq v' \leq w' \leq w$.}} 
\item[(ii)] {\em{The top-dimension of irreducible components of $D_0(G/B_+, \pi)_{\redd}$ is equal to 
\[
\sharp \mathcal{B} = l_{\Delta}(w_0), 
\]
where $\mathcal{B}$ is Kostant's cascade \cite{Kostant} of roots of $\Lie G$ and $l_\Delta(w_0)$ is the reflective length of the longest element $w_0$ of the Weyl group of $G$. The reduced 
Poisson degeneracy locus $D_0(G/B_+, \pi)_{\redd}$ is not equidimensional in general.}}  
\item[(iii)] {\em{The Poisson degeneracy locus $D_0(G/B_+, \pi)$ is not reduced in general.}} 
\end{enumerate}
\medskip

We note the difference between Theorem C(iii) and the results in \cite[Sect. 8]{GP}. While the degeneracy loci of the 
Fe\u{\i}gin--Odesski\u{\i} Poisson structures \cite{FO} on projective spaces were conjectured to be reduced \cite[Sect. 8]{GP}, we prove that those for flag varieties 
are not. In Section \ref{non-reducedness} a detailed picture for the full flag varieties of $SL_3(\Cset)$ and $SL_4(\Cset)$ is presented. 

In \cite{GY-MEMO} cluster algebra structures on arbitrary Poisson CGL extensions were constructed. 
For example, this construction applies to the coordinate rings of the Schubert cells of all flag varieties $G/B_+$ with the restrictions of the  
Poisson structure $\pi$. It is interesting to understand the relationship between the geometry of the degeneracy  
loci $D_0(G/B_+, \pi)$ and the structure of the cluster algebras in question. In the simply laced case the latter were much studied from many points of view, e.g. additive and monoidal categorifications \cite{GLS,KKKO}. 

Artin, Tate and Van den Bergh \cite{ATVdB} introduced the notion of point schemes for $\Nset$-graded connected algebras and proved 
important properties of regular algebras of dimension 3 using surjective maps to the associated twisted homogeneous coordinate rings (here and below $\Nset := \{0, 1, \ldots\}$).
Rogalski and Zhang \cite{RZ} proved that this map is surjective in large degrees for all strongly Noetherian algebras generated in degree 1. Point schemes can be thought of as 
quantum analogs of Poisson degeneracy loci. The results in this paper and the classification of prime ideals of quantum flag varieties \cite{Y-PAMS} form the basis of a description of the point schemes of quantum flag varieties \cite{LR,S} and the associated twisted homogeneous coordinate rings which we will give in a future publication. 
\medskip

\noindent
{\bf Acknowledgements.} The research of E.C. and A.M. was supported by NSF grant DMS–2200762 and the RTG NSF grant DMS-1645877. The research of M.Y. was supported by NSF grant DMS–2200762.  

\sectionnew{background material on semisimple Lie groups} \label{section : Lie theory}

 In this section we gather material on Weyl groups, flag varieties and Richardson varieties which will be needed in the paper.

 \subsection{Weyl groups and root systems} \label{subsection : W gp and roots}
 
Let $G$ be a connected simply connected complex semisimple Lie group. Denote by $\mathfrak{g} = \mathrm{Lie} \, G$ its Lie algebra. Let $B_{\pm}$ be fixed opposite Borel subgroups of $G$,  $T := B_{+}\cap B_{-}$ be the corresponding maximal torus of G and  let $ \mathfrak{b}_{\pm}$ and $ \mathfrak{t}$ denote their respective Lie algebras. We have $\mathfrak{g} = \mathfrak{b}_+ \oplus \mathfrak{t} \oplus \mathfrak{b}_-$. Denote by $P$ and $Q$ the weight and root lattices of $G$. Fix an index set 
$[1,n]:=\{1, \ldots, n\}$ of the vertices of the Dynkin diagram of $G$ and denote by 
\[
\Pi := \{ \alpha_i \mid 1 \leq i \leq n \} \subset \mathfrak{t}^{*}
\]
the set of simple roots, by $\Delta_{+}$ the set of positive roots and by $\Delta := \Delta_{+} \sqcup (- \Delta_{+})$ the set of roots of $G$. A subset $\Psi \subset \Delta$ is said to be convex if 
$$ \forall \mu, \nu \in \Psi, \mu + \nu \in \Delta \Rightarrow \mu + \nu \in \Psi. $$
Let $C=(c_{ij})_{i,j=1}^n$ be the Cartan matrix of $G$
and $d_1, \ldots, d_n$ be the positive integers symmetrizing $C$ such that the subcollection of those corresponding to any simple factor of $G$ are relatively prime. Let $( \cdot , \cdot)$ be symmetric non-degenerate bilinear form on $\mathfrak{t}^{*}$ such that 
\[
(\alpha_i, \alpha_j) = d_i c_{ij}.
\]
Set
\[
\| \gamma \|^2 := ( \gamma, \gamma ).
\]
We will denote by $<$ the partial order on $\Delta_+$ given by 
$$\beta < \gamma \Leftrightarrow \gamma - \beta \in \sum_i \Nset \alpha_i .$$
 For each $\beta \in \Delta_+$ there is an injective group homomorphism $u_{\beta} : \mathbb{G}_a \rightarrow G$ satisfying $tu_{\beta}(z)t^{-1} = u_{\beta}(\beta(t)z)$ for any $t \in T$ and $z \in \mathbb{C}$. Denote by $U_{\beta}$ the one-parameter unipotent subgroup of $G$ given by $U_{\beta} := u_{\beta}(\mathbb{G}_a)$. For each $\beta \in \Delta$ there is a unique injective homomorphism $\varphi_{\beta} : SL_2(\mathbb{C})  \rightarrow G$ given by 
$$ \varphi_{\beta} \begin{pmatrix} 1 & z \\ 0 & 1 \end{pmatrix}  = u_{\beta}(z), \qquad \varphi_{\beta} \begin{pmatrix} 1 & 0 \\ z & 1 \end{pmatrix}  = u_{-\beta}(z)  $$
for any $z \in \mathbb{C}$. The image of $\varphi_\beta$ will be denoted by $L_{\beta}$. Let
$$ \beta^{\vee}(z) :=  \varphi_{\beta} \begin{pmatrix} z & 0 \\ 0 & z^{-1} \end{pmatrix}  \in T $$
for $z \in \mathbb{C}^{\times}$. 
Denote the Borel subgroups of $L_\beta$ 
\[
B_{\pm \beta} := L_\beta \cap T U_{\pm \beta} 
\]
and the maximal torus
\[
T_\beta := L_\beta \cap T = B_{\beta} \cap B_{-\beta}.
\]

Denote by $W := N_G(T)/T$ the Weyl group of $G$, where $N_G(T)$ stands for the normalizer of $T$ in $G$. We have that 
$$  \dot{s}_{\beta} := \varphi_{\beta} \begin{pmatrix} 0 & 1 \\ -1 & 0 \end{pmatrix}  \in N_G(T). $$
Let $s_{\beta}$ be the class of $\dot{s}_{\beta}$ in $W$, the reflection associated to $\beta$. These reflections generate the group $W$, and a minimal collection of generators is given by the simple reflections, i.e., the reflections associated to the simple roots $\alpha_i, 1 \leq i \leq n$. Set for brevity
\[
s_i := s_{\alpha_i}.
\]
 We will make use of the following identity that holds for every $\beta \in \Delta$:
\begin{equation} \label{eqn: important identity}
 \forall z \in \mathbb{C}^{\times}, \quad  u_{- \beta}(z^{-1}) = u_{\beta}(z) \beta^{\vee}(z) \dot{s}_{\beta} u_{\beta}(z) .
\end{equation}
The Weyl group $W$ acts on the set of characters of $T$ by permuting the elements of $\Delta$ and hence can be viewed as a subgroup of $GL(\mathfrak{t}^{*})$, namely, the subgroup of $GL(\mathfrak{t}^{*})$ generated by the reflections $s_{\beta}$, $\beta \in \Delta$, given by 
$$ s_{\beta} : x \mapsto x - 2   \frac{(x,\beta)}{\|\beta \|^2} \beta . $$
Note that two reflections $s_{\beta}$ and $s_{\gamma}$ commute if and only if $\beta$ and $\gamma$ are orthogonal roots. 
Moreover we have that $v^{-1}U_{\beta}v = U_{v^{-1}(\beta)}$ for each Weyl group element $v \in W$ and $\beta \in \Delta$.

We identify $\mathfrak{t}^* \simeq \mathfrak{t}$ as vector spaces via the nondegenerate form $(.,.)$ on $\mathfrak{t}^*$ and use the identification to transfer the form to $\mathfrak{t}$. The last form has a unique 
extension to a nondegenerate invariant symmetric
bilinear form on $\mathfrak{g}$ which will be 
denoted by the same notation. 

For $w \in W$, we will denote by $\dot{w}$ a representative of it in the normalizer $N(T)$ of $T$. 
 \subsection{Length, reflection length and reduced expressions}
 \label{sec:background-Lie}

Given a generating set $R$ of $\mathfrak{t}^{*}$, the reflection length of an element $w \in W$ with respect to $R$, denoted $l_R(w)$, is the minimal $r \in \Nset$  such that $w$ can be written as a product of $r$ reflections $s_{\beta_1}, \ldots , s_{\beta_r}$ with $\beta_1, \ldots , \beta_r \in R$. In the case $R= \Pi$, $l_{\Pi}(w)$ is the usual notion of length of $w$, denoted $l(w)$. Denote by $w_0$ the maximal length element of $W$.
 At the other extreme 
 \[
l_\Delta(w)
 \]
  is called the \emph{reflection length} of $w$ (and also absolute length or rank). We will use the first terminology. We will need the following property:

  \begin{lemma}[Carter, \cite{Carter}] \label{lem : refl length}
  For each $w \in W$, one has 
  \[
  l_{\Delta}(w) = n - \dim \Ker(w-1).
  \]
  Moreover, $w$ is an involution if and only if $w$ can be written as a product 
  \[
  w = s_{\beta_1} \ldots s_{\beta_r}
  \]
  of reflections associated to pairwise orthogonal roots, and in that case we have $r=l_{\Delta}(w)$. 
  \end{lemma}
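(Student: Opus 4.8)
The plan is to deduce both statements from the geometry of the reflection representation of $W$ on $\mathfrak{t}^* \cong \Rset^n$ (the relevant real form), using the standard fact that each reflection $s_\beta$ fixes the hyperplane $\beta^\perp$ and negates $\beta$. For the dimension formula $l_\Delta(w) = n - \dim\Ker(w-1)$, I would argue by two inequalities. First, if $w = s_{\beta_1}\cdots s_{\beta_r}$ with $r = l_\Delta(w)$, then every vector orthogonal to all of $\beta_1, \ldots, \beta_r$ is fixed by $w$, so $\dim\Ker(w-1) \geq n - \dim\Span(\beta_1, \ldots, \beta_r) \geq n - r$, giving $r \geq n - \dim\Ker(w-1)$. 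For the reverse inequality, set $V := \Im(w-1) = \Ker(w-1)^\perp$ (using that $w$ is orthogonal, so $w-1$ and its adjoint $w^{-1}-1$ have the same kernel, forcing the image to be the orthogonal complement of the kernel), and induct on $d := \dim V = n - \dim\Ker(w-1)$. The key geometric input is that whenever $w \neq 1$ one can find a root $\beta$ with $\beta \notin \Ker(w-1)$ such that $s_\beta w$ has $\dim\Ker(s_\beta w - 1) = \dim\Ker(w-1) + 1$: concretely, pick any root $\beta \in \Delta$ not orthogonal to $V$ and lying in the $w$-invariant subspace $V$ (such a root exists because $V$ is spanned by roots — it is a subspace stable under the subgroup generated by the reflections moving it), then $s_\beta$ restricted to $V$ drops the $(-1)$-or-nontrivial part of $w|_V$ by one dimension in the sense that $\dim\Ker(s_\beta w|_V - 1)$ increases; peeling off one reflection at a time writes $w$ as a product of $d$ reflections, so $l_\Delta(w) \leq d$.

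For the second statement, suppose $w$ is an involution. Then $\mathfrak{t}^* = \Ker(w-1) \oplus \Ker(w+1)$ as an orthogonal direct sum, and $w$ acts as $-1$ on the subspace $U := \Ker(w+1)$ of dimension $d = l_\Delta(w)$. I would show $U$ has an orthogonal basis consisting of roots: $U$ is a subspace spanned by roots (it is the span of all roots in $U$, since $w$ permutes $\Delta$ and $U$ is a sum of eigenspaces, so $U \cap \Delta$ spans $U$ — this uses that a root system spanning an invariant subspace can be found by projecting), and within this sub-root-system $w|_U = -1$ is the longest element, which is a product of $\dim U$ pairwise orthogonal reflections by the classical structure theory (every root subsystem on which $-1$ acts is a product of copies of type $A_1$ after choosing the appropriate mutually orthogonal roots, equivalently $-1 \in W(U)$ is always expressible this way via an orthogonal root basis). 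Then $w = s_{\beta_1}\cdots s_{\beta_d}$ with $\beta_i$ pairwise orthogonal roots spanning $U$. Conversely, if $w = s_{\beta_1}\cdots s_{\beta_r}$ with the $\beta_i$ pairwise orthogonal, the reflections commute, each has order $2$, so $w^2 = 1$; and the $\beta_i$ being orthogonal (hence linearly independent) shows $\Ker(w-1) = \{\beta_1, \ldots, \beta_r\}^\perp$ has dimension exactly $n-r$, so $r = l_\Delta(w)$ by the dimension formula just proved.

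I would present the argument by first establishing the dimension formula, then using it to get the "in that case $r = l_\Delta(w)$" clause almost for free from linear independence of orthogonal roots, and finally addressing the involution equivalence. The main obstacle — and the step I would be most careful about — is the existence of a root inside the moved space $V = \Im(w-1)$ that is not orthogonal to all of $V$: one must check that $V$, though defined linearly, is actually spanned by roots of $\Delta$, which follows because $V$ is invariant under a subgroup of $W$ generated by reflections and such invariant subspaces of the reflection representation are always root-spanned (parabolic-type subspaces). Since this is precisely Carter's classical result, I would simply cite \cite{Carter} for this structural fact rather than reprove the theory of root subsystems, and use it to make the inductive descent rigorous.
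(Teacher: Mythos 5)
The paper gives no proof of this lemma---it is stated as a citation to \cite{Carter} and invoked as a known result---so there is no internal argument to compare against. Your sketch correctly reconstructs Carter's original line of reasoning: the inequality $l_\Delta(w) \geq n - \dim\Ker(w-1)$ from the observation that vectors orthogonal to all the $\beta_i$ are fixed by $w$, the reverse inequality by inductively peeling off a reflection with root in $V := \Im(w-1)$, and the involution characterization via the orthogonal decomposition $\mathfrak{t}^* = \Ker(w-1) \oplus \Ker(w+1)$ together with the existence of an orthogonal root basis of the $(-1)$-eigenspace. The one step genuinely glossed over is the descent claim: that for \emph{any} root $\beta \in V$ one has $\dim\Ker(s_\beta w - 1) > \dim\Ker(w-1)$ is true but not self-evident, and the phrase about $s_\beta$ ``dropping the $(-1)$-or-nontrivial part of $w|_V$ by one dimension'' does not supply a reason. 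A clean justification: $(w-1)|_V$ is invertible on $V$, and since $w$ is orthogonal one has $(w-1)^* = -w^{-1}(w-1)$, so $A := ((w-1)|_V)^{-1}$ satisfies $A + A^* = -\mathrm{id}_V$; hence $2(A\beta, \beta) = -\|\beta\|^2$, which exhibits $A\beta$ as a new fixed vector of $s_\beta w$ beyond $\Ker(w-1)$. (Minor: ``not orthogonal to $V$'' is automatic for any root $\beta \in V$.) Since you explicitly defer to \cite{Carter} for the structural inputs---that $V$ is spanned by roots, and that $-1$ in a reflection subgroup admits an orthogonal root factorization---the proposal is acceptable, but the descent inequality is where the actual content of Carter's argument lies and is worth stating carefully rather than by analogy.
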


For a Weyl group element $w \in W$, denote the subset of positive roots
$$ \Delta_{+}^w := \Delta_+ \cap w (- \Delta_+) =
\{ \beta \in \Delta_+ \mid w^{-1}(\beta) \in - \Delta_+ \}.
$$
  Recall that 
  \begin{equation}
  \label{eq:w-rw}
  \mathbf{w} = (i_1, \ldots , i_l) \in [1, n]^l 
  \end{equation}
  is called a \emph{reduced word} of $w$ if $w = s_{i_1} \ldots s_{i_l}$ and $l=l(w)$. Given such a reduced word, denote
 \[
 w_{\leq k} := s_{i_1} \ldots s_{i_k}
 \]
for $0 \leq k \leq l$. The set $\Delta_{+}^w$ is given by 
\begin{equation}
\label{eq:beta-k}
\Delta_{+}^w = \{ \beta_1, \ldots , \beta_l \}, \quad \text{where} \quad \beta_k := w_{\leq k-1} (\alpha_{i_k}), \; \; \forall 1 \leq k \leq l.
\end{equation}
The set $\Delta_{+}^w$ is convex, and moreover 
\[
\beta_1 \prec_w \cdots \prec_w \beta_l
\]
defines a \emph{convex ordering} on $\Delta_{+}^w$, which means that if $\beta_k + \beta_l \in \Delta_+$ with $k<l$, then there is $k<m<l$ such that $\beta_k + \beta_l = \beta_m$. Note that two distinct reduced expressions of $w$ yield different convex orderings in general. 

 A subword of $\mathbf{w}$ is a word of the form 
 \begin{equation}
 \label{eq:v-sub-w}
 \mathbf{v} := (i_1, \ldots , \widehat{i}_{p_1} , \ldots , \widehat{i}_{p_d} , \ldots , i_l)
 \end{equation}
 obtained from $\mathbf{w}$ by removing its entries in positions $1 \leq p_1<  \cdots < p_d \leq l$ for some $0 \leq d \leq l$. The value of $\mathbf{v}$ is defined to be 
 \[
 s_{i_1} \ldots \widehat{s}_{i_{p_1}} \ldots \widehat{s}_{i_{p_d}} \ldots s_{i_l}.
 \]
 Note that the word $\mathbf{v}$ might not be reduced in general. We have
\begin{align*}
    w &= s_{i_{1}} \dots s_{i_{p_{d}}} \dots s_{i_{l}} = s_{\beta_{p_{d}}} s_{i_{1}}\dots s_{i_{p_{d-1}}} \dots \hat{s}_{i_{p_{d}}} \dots s_{i_{l}} \\
    &= s_{\beta_{p_d}} s_{\beta_{p_{d-1}}} s_{i_{1}}\dots \widehat{s}_{i_{p_{d-1}}} \dots \hat{s}_{i_{p_d}} \dots s_{i_l} = \dots =  s_{\beta_{p_d}} \dots  s_{\beta_{p_1}} v,
\end{align*}
which shows the following:
\ble{l:Bruhat}
Consider a reduced word \eqref{eq:w-rw} of $w\in W$ and a subword \eqref{eq:v-sub-w} of it with value $v \in W$. Then 
\[
    v = s_{\beta_{p_{d}}} \dots s_{\beta_{p_{1}}} w
\]
in terms of the roots \eqref{eq:beta-k}.
\ele

Finally, we recall that the Bruhat order on $W$ is given by setting $v \leq w$ if there is a subword of (one and thus any) reduced word of $w$ with value $v$. The corresponding covering relation will be denoted by 
$v \lessdot w$ $\Leftrightarrow$ $v \leq w$ and $l(v) = l(w)-1$.
   \subsection{The full flag variety $G/B_{+}$ and Richardson varieties }
    \label{subsection: flags and Richardson}
The full flag variety $G/B_+$ of $G$ has the stratifications
\[
G/B_{+} = \bigsqcup_{v \in W} B_{-} v B_{+}/B_{+} =
\bigsqcup_{w \in W} B_{+} w B_{+}/B_{+}
\]
with strata the Schubert cells, $B_{-}vB_{+}/B_{+}$ and $B_{+} w B_{+}/B_{+}$, $v, w \in W$. Their closures in $G/B_+$ are the Schubert varieties
\[
\overline{B_{-}v B_{+}/B_{+}} = 
\bigsqcup_{v' \geq w} B_{-} v' B_{+}/B_{+}, 
\quad 
\overline{B_{+} w B_{+}/B_{+}} = 
\bigsqcup_{w' \leq w} B_{+} w' B_{+}/B_{+}.
\]
The \emph{open Richardson varieties} are the intersections 
\[
\oRvw := B_{-}vB_{+}/B_{+} \cap B_{+}wB_{+}/B_{+}
\]
for $v, w \in W$; $\oRvw$ is non-empty if and only if $v \leq w$ in which case it is an irreducible affine subvariety of $G/B_+$ of dimension $l(w)-l(v)$. They give rise to the stratification 
\begin{equation}
    \label{strat}
G/B_{+} = \bigsqcup_{v \leq w} \oRvw.
\end{equation}
The \emph{closed Richardson variety} $\Rvw$ is the Zariski closure of $\oRvw$ in $G/B_+$; it has the stratification 
\begin{equation}
\label{eq:clos-R}
 \Rvw = \bigsqcup_{v \leq v' \leq w' \leq w} \mathcal{R}_{v'}^{w'}.
\end{equation}
It was proved by Deodhar \cite{Deodhar} that the open Richardson variety $\oRvw$ admits a decomposition indexed by the subset $\mathcal{D}_v^{\mathbf{w}}$ of \emph{distinguished subwords} of $w$ with value $v$. 
Given a reduced word $\mathbf{w}$ for $w$ as in \eqref{eq:w-rw} and a subword $\mathbf{v}$ for $v$ as in \eqref{eq:v-sub-w}, 
we let 
$$ \sigma_j := s_{\beta_{p_1}} \ldots s_{\beta_{p_{l_j}}} w_{\leq j}, \quad   \text{where} \quad l_j := \max \{ 1 \leq k \leq d \mid p_k \leq j \}  $$
for every $1 \leq j \leq l$. Then  $\mathbf{v}$ is called a {\em{distinguished subword}} of $w$ with value $v$ if one has $\sigma_j \leq \sigma_{j-1} s_{i_j}$ for every $1 \leq j \leq l$. It was shown by Deodhar (see {{\cite[Lemma 5.1]{Deodhar}}}) that, if $\mathbf{v}$ is a distinguished subword of $w$ with value $v$, then the quantities $n(\mathbf{v})$ and $m(\mathbf{v})$ defined by  
$$ n(\mathbf{v}) := \sharp \{ 1 \leq j \leq l \mid \sigma_{j-1} = \sigma_j \} \quad \text{and} \quad m(\mathbf{v}) := \sharp \{ 1 \leq j \leq l \mid \sigma_{j-1} > \sigma_j \} $$
satisfy the following relation:
\begin{equation} \label{eq : distinguished}
 n(\mathbf{v}) + 2 m(\mathbf{v}) = l(w)-l(v) . 
 \end{equation}
By way of definition $n(\mathbf{v}) = d$. 
 
 The Deodhar decomposition of the open Richardson variety $\oRvw$ is written as 
\begin{equation}
\label{Deodhar}
\oRvw = \bigsqcup_{\mathbf{v} \in \mathcal{D}_v^{\mathbf{w}}} (\mathbb{C}^{\times})^{n(\mathbf{v})} \times (\mathbb{C})^{m(\mathbf{v})}.
\end{equation}
As shown in \cite[Lemma 3.5]{MarshRietsch}, there is a unique $\mathbf{v}_+ \in \mathcal{D}_v^{\mathbf{w}}$ (called the positive subword), such that $m(\mathbf{v}_+)=0$, i.e., $n(\mathbf{v}_+)=l(w)-l(v)$. 
This implies that there is exactly one Deodhar open stratum of $\oRvw$; its is a torus of dimension equals the dimension of $\oRvw$. 

 The following lemma will be useful to us:

  \ble{lem : refl length less than n} Assume that $\mathbf{w}$ is a reduced word of $w$ and $v \leq w$. 
  Then for any distinguished subword $\mathbf{v}$ of $\mathbf{w}$ with value $v$, we have
    $$ n(\mathbf{v}) \geq l_{\Delta}(vw^{-1}).$$
  \ele
  \begin{proof} Assume the notation \eqref{eq:w-rw} and \eqref{eq:v-sub-w} for $\mathbf{w}$ and $\mathbf{v}$. By Lemma~\ref{l:Bruhat} 
    $$ v = s_{\beta_{p_1}} \ldots s_{\beta_{p_d}} w,  $$
    so
    $$ l_{\Delta}(vw^{-1})  \leq d = n(\mathbf{v}).  $$
  \end{proof}

\subsection{Partial flag varieties and projected Richardson varieties}
\label{section : recollection on partial flags}

Let $P \supseteq B_+$ be a parabolic subgroup of $G$. Denote by $L$ be the Levi subgroup of $P$ containing $T$ and by $\Delta^L$ the root system of $\Lie L$ though as a
subset of $\Delta$. Set $\Delta_{\pm}^L := \Delta_{\pm} \cap \Delta^L$.
 
Let $W_P \subset W$ be the parabolic Weyl group associated to $P$ and $W^P$ be the collection of unique minimal length representatives of the cosets in $W/W_P$. For $w \in W$ denote by $w^P$ the  unique minimal length representative of the class of $w$ in $W/W_P$, thus
$w = w^P w_P$ for a unique $w_P \in W_P$ and $l(w) = l(w^P) + l(w_P)$. The $P$-Bruhat order of Knutson--Lam--Speyer \cite{KLS}
is the partial ordering $\leq_P$ of $W$ which is the transitive closure of the covering relation $\precdot_P$ given by 
\begin{equation}
    \label{eq:Plessdot}
v \precdot_P w \Leftrightarrow \text{$v \precdot w$  and $v W_P \neq wW_P$.} 
\end{equation}
We have,
\[
w \in W^P, \; \; v \leq w \Rightarrow v \leq_P w, 
\]
\cite[Proposition 2.5]{KLS}.

Denote the natural projection 
\begin{equation}
\label{eq:eta}
\eta: G/B_+ \longrightarrow G/P. 
\end{equation}
For $v \leq_P w$ consider the projected open Richardson variety \cite{KLS-C,KLS}
\begin{equation}
\label{eq:projopenRich}
\Pi_v^w := \eta(\oRvw).
\end{equation}
Lusztig \cite{Lusztig} constructed the stratification of $G/P$ 
\begin{equation}
\label{P-strat}
G/P = \bigsqcup_{ \substack{v \leq w \\ w \in W^P}} \Pi_v^w
\end{equation}
that generalizes the stratification \eqref{strat} of $G/B_+$. Following \cite{KLS}, 
define the equivalence relation on the pairs
$\{(v, w) \in W \times W \mid v \leq_P w\}$ generated by the relation $(v, w) \sim (v', w')$ if $v = v' z$, $w = w' z$ for some $z \in W_P$ such that 
$l(v) = l(v') + l(z)$, $l(w) = l(w') + l(z)$.
By \cite[Lemma 3.1]{KLS},
\begin{equation}
\label{proj1}
(v, w) \sim (v', w') \quad \Rightarrow \quad \Pi_{v}^w = \Pi_{v'}^{w'}.
\end{equation}
Furthermore, by \cite[Lemma 2.4]{KLS}, if $v \leq_P w = w^P w_P$ with 
$w^P \in W^P$ and $w_P \in W_P$, then 
\begin{equation}
\label{proj2}
(v,w) \sim (v w_P^{-1}, w^P), \quad
\mbox{and so} \quad
\Pi_v^w = \Pi_{v w_P^{-1}}^{w^P}.
\end{equation}
Combining \eqref{P-strat}--\eqref{proj2}, gives that for all $v, w, v', w'$ with $v \leq_P w$, $v' \leq_P w'$, 
\begin{equation}
\label{proj3}
(v, w) \sim (v', w') \quad \Leftrightarrow \quad \Pi_{v}^w = \Pi_{v'}^{w'}.
\end{equation}
The Zariski closures of $\Pi_v^w$ were first determined in \cite{GY,MarshRietsch}. We will use the following description given in \cite[Proposition 3.6]{KLS}
 \begin{equation} \label{eq : stratif of proj Richardsons}
  \overline{\Pi_v^w} = \eta(\Rvw) = \bigsqcup_{v \leq v' \leq_P w' \leq w} \Pi_{v'}^{w'}, \quad \forall v \leq_P w. 
  \end{equation}

\sectionnew{A stratification of the Poisson degeneracy loci of flag varieties}
In this section we describe the stratification of the Poisson degeneracy locus of every flag variety $G/B_+$ (with respect to the standard Poisson structure) into orbits under the fixed maximal torus of $G$. This stratification is given in terms of open Richardson varieties of a specific combinatorial type, 
which is investigated in detail. 
\subsection{The standard Poisson structure on $G/B_+$}
\label{sec:Poisson-flag}
For each positive root $\beta \in \Delta_+$ fix root vectors $e_\beta$ and $f_\beta$, corresponding to roots 
$\beta$ and $-\beta$ and normalized by
\[
(e_\beta, f_\beta) =1.  
\]
The standard Poisson structure on the flag variety $G/B_+$ is given by the Poisson bivector field 
\begin{equation}
\label{eq:pi}
    \pi = \sum_{\beta \in \Delta_{+}} \chi(e_\beta) \wedge \chi(f_\beta)
\end{equation}
where $\chi: \mathfrak{g} \to \mathrm{Vect}(G/B_{+})$ is the infinitesimal action of $G$ on $G/B^{+}$ (see e.g. \cite{BGY,GY}). Furthermore, the action of the maximal torus $T$ on ($G/B_{+},\pi$) is Poisson, i.e., it preserves the Poisson structure. Alternatively, one can define the Poisson structure $\pi$ to be the push-forward of the standard Poisson structure \cite[Sect. 4.4]{ES} on $G$ under the projection map $G\to G/B_+$. We will need the following results about the Poisson manifold $(G/B_+, \pi)$.
\bth{t:sympl-leaves} For all connected simply connected complex semisimple Lie groups $G$ the following hold:
\begin{enumerate} 
\item[(i)] \cite[Theorem 0.4(i)]{GY} The $T$-orbits of symplectic leaves of $(G/B_{+},\pi)$ are precisely the open Richardson varieties $\oRvw$ for $(v,w)\in W\times W$, $v\leq w$. In particular, all open Richardson varieties are regular Poisson submanifolds of $(G/B_{+},\pi)$.
\item[(ii)] \cite[Theorem 3.1(1)]{Y} The codimension of a symplectic leaf in $\oRvw$, i.e., the corank of $\pi$ in $\oRvw$, equals
\begin{equation*}
    \dim \Ker(w^{-1}v +1).
\end{equation*}
\end{enumerate}
\eth
In connection to the corank property in the second part of the theorem, we have the following:
\ble{l:ker}
For all Weyl group elements $v, w \in W$, 
\begin{multline*}
\dim \Ker(w^{-1}v +1)= 
\dim \Ker(vw^{-1} +1)
\\
= \dim \Ker(w v^{-1} +1) = 
\dim \Ker(v^{-1} w +1).
\end{multline*}
\ele
\begin{proof} The first identity holds because the products in question are conjugated
\[
(v w^{-1} + 1) = w ( w^{-1} v + 1) w^{-1}
\] 
and the third identity is analogous. The second identity follows from the fact that the $-1$ eigenvectors of an operator $A \in GL(V)$ and 
its inverse $A^{-1}$ coincide.
\end{proof}


\bre{symmetries} There are three different types of symmetries of the Poisson degeneracy loci $D_{2k}(G/B_+, \pi)$:
\begin{enumerate}
\item[(1)] The left action of $T$ on $G/B_+$ preserves the Poisson structure $\pi$. This induces an action of $T$ on $D_{2k}(G/B_+, \pi)$.
\item[(2)] Denote by $\Aut(\Gamma)$ the automorphism 
group of the Dynkin graph $\Gamma$ of $G$. Each element $\tau \in \Aut(\Gamma)$ lifts to an automorphism of $\g$ which preserve the bilinear form $(.,.)$ and is defined by 
\[
e_{\alpha_i} \mt e_{\tau(\alpha_i)}, \;\; 
f_{\alpha_i} \mt f_{\tau(\alpha_i)}, \;\; 
h_{\alpha_i} \mt h_{\tau(\alpha_i)}, \;\;
\forall 1 \leq i \leq n.
\]
The latter in turn lifts to an automorphism 
$\Xi_\tau \in \Aut(G)$ such that $\Xi_\tau(B_+) = B_+$.
This gives an action of $\Aut(\Gamma)$ 
on $G/B_+$ that preserves the Poisson structure $\pi$, and thus defines an action of $\Aut(\Gamma)$ on $D_{2k}(G/B_+, \pi)$.
\item[(3)] It is easy to show that there exists a representative $\dot{w}_0 \in N(T)$ of the longest element $w_0$ of $W$ such that 
\begin{equation}
\label{eq:Xi-dot}
\dot{w}_0^2 =1 \quad \mbox{and} \quad \Xi_\tau(\dot{w}_0) = \dot{w}_0, \; \; \forall \tau \in \Aut(\Gamma).
\end{equation}
In the setting of \eqref{eq:pi}, 
\[
\Ad_{\dot{w}_0}(e_\beta) \wedge 
\Ad_{\dot{w}_0}(f_\beta) = - e_\beta \wedge f_\beta,
\quad \forall \beta \in \Delta_+.
\]
Therefore the left multiplication action of $\dot{w}_0$ on $(G/B_+, \pi)$ is anti-Poisson. This gives an action of $\Zset_2$ on on $D_{2k}(G/B_+, \pi)$.
\end{enumerate}
It follows from \eqref{eq:Xi-dot} that the $\Aut(\Gamma)$ and $\Zset_2$-actions on $G/B_+$ 
commute; it is obvious that they normalize the $T$-action. Thus we have an action of 
\[
(\Aut(\Gamma) \times \Zset_2) \ltimes T
\]
on $(G/B_+, \pi)$ (where each elements acts by a Poisson automorphism or anti-automorphism) and on the corresponding Poisson degeneracy loci $D_{2k}(G/B_+, \pi)$. 
\ere
\subsection{Product decompositions and the reflective length}
We will need a couple of intermediate results to describe the strata of the reduced Poisson degeneracy locus $D_0(G/B_+,\pi)_{\redd}$. 

\begin{lemma} \label{l:orth}
    Let $\gamma_{1},\dots, \gamma_{d} \in \Delta$. Then, $\dim \Ker(s_{\gamma_{1}}\dots s_{\gamma_{d}} +1 ) = d$ if and only if $\gamma_{j} \bot   \gamma_{k}, \, \forall j\neq k$.
\end{lemma}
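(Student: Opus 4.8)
The plan is to reduce everything to linear algebra in the real span $E := \bigoplus_{i}\Rset\alpha_i$ of the roots, on which $W$ acts by orthogonal transformations for the positive definite form $(\cdot,\cdot)$; since $\mathfrak t^* = E\otimes_\Rset\Cset$ and the eigenvalue $-1$ is real, $\dim_\Cset\Ker(s_{\gamma_1}\cdots s_{\gamma_d}+1)$ equals the dimension of the $-1$-eigenspace of $w:=s_{\gamma_1}\cdots s_{\gamma_d}$ acting on $E$, so I work there. For $u\in O(E)$ write $\mathrm{Fix}(u):=\Ker(u-\id)$ and $\mathrm{Mov}(u):=\Im(u-\id)$; since $u$ is orthogonal these are orthogonal complements of each other. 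I will use three elementary facts: $\mathrm{Mov}(s_\gamma)=\Rset\gamma$; $\mathrm{Fix}(u_1)\cap\mathrm{Fix}(u_2)\subseteq\mathrm{Fix}(u_1u_2)$, hence after taking orthogonal complements $\mathrm{Mov}(u_1u_2)\subseteq\mathrm{Mov}(u_1)+\mathrm{Mov}(u_2)$; and $\Ker(u+\id)\subseteq\mathrm{Mov}(u)$, because $x=-\tfrac12(u-\id)x$ whenever $ux=-x$. Together they give the a priori estimate
\[
\dim\Ker(w+\id)\;\le\;\dim\mathrm{Mov}(w)\;\le\;\dim\Span(\gamma_1,\dots,\gamma_d)\;\le\;d .
\]

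The direction ``$\Leftarrow$'' is then immediate: if the $\gamma_j$ are pairwise orthogonal they are linearly independent, the reflections $s_{\gamma_j}$ commute, and a one-line computation shows that $w$ acts as $-\id$ on $V:=\Span(\gamma_1,\dots,\gamma_d)$ and as $\id$ on $V^\perp$, so $\dim\Ker(w+\id)=\dim V=d$. (Alternatively, Carter's Lemma~\ref{lem : refl length} gives that $w$ is an involution with $l_\Delta(w)=d$, whence $\dim\Ker(w-\id)=n-d$ and, $w$ being an involution, $\dim\Ker(w+\id)=n-(n-d)=d$.)

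For ``$\Rightarrow$'' suppose $\dim\Ker(w+\id)=d$. Then all the inequalities above are equalities, so $\gamma_1,\dots,\gamma_d$ are linearly independent, $V:=\Span(\gamma_1,\dots,\gamma_d)=\mathrm{Mov}(w)=\Ker(w+\id)$, and $w=-\id$ on $V$, $w=\id$ on $V^\perp$. I will prove by induction on $d$ that the $\gamma_j$ are pairwise orthogonal, the cases $d\le1$ being vacuous. For $d\ge2$ each $s_{\gamma_j}$ preserves $V$, and from $w|_V=-\id_V$ together with $w=w's_{\gamma_d}$, where $w':=s_{\gamma_1}\cdots s_{\gamma_{d-1}}$, I get $w'|_V=-\,s_{\gamma_d}|_V$; this operator acts as $-\id$ on the hyperplane $\gamma_d^\perp\cap V$ of $V$, which forces $\dim\Ker(w'+\id)\ge d-1$, and the a priori estimate applied to $w'$ makes it exactly $d-1$. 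By the inductive hypothesis $\gamma_1,\dots,\gamma_{d-1}$ are pairwise orthogonal, and (rerunning the equality discussion for $w'$) $V':=\Span(\gamma_1,\dots,\gamma_{d-1})=\Ker(w'+\id)$ with $w'=\id$ on $V'^\perp$. It remains to see that $\gamma_d\perp V'$: writing $\gamma_d=p+q$ with $p\in V'$ and $q\in V'^\perp$, the two identities $w\gamma_d=-\gamma_d$ (since $\gamma_d\in\Ker(w+\id)$) and $w\gamma_d=w's_{\gamma_d}\gamma_d=-w'\gamma_d=p-q$ force $p=0$, i.e. $\gamma_d\perp\gamma_j$ for all $j<d$. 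This completes the induction.

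The one place where care is genuinely needed is this inductive step. Peeling $s_{\gamma_1}$ off on the left and $s_{\gamma_d}$ off on the right independently only yields orthogonality within $\{\gamma_2,\dots,\gamma_d\}$ and within $\{\gamma_1,\dots,\gamma_{d-1}\}$ respectively, and leaves the pair $(\gamma_1,\gamma_d)$ unaccounted for; the remedy above is to remove a single reflection and then recover the missing orthogonality from the orthogonal decomposition of $\gamma_d$ relative to the moved space $V'$ of $w'$. Everything else is routine bookkeeping with $\mathrm{Fix}$ and $\mathrm{Mov}$.
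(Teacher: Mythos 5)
Your proof is correct. Both you and the paper rest on the same skeleton for the easy direction and on the same containment $\Ker(s_{\gamma_1}\cdots s_{\gamma_d}+1)\subseteq\Span(\gamma_1,\dots,\gamma_d)$ (which you obtain as $\Ker(u+1)\subseteq\mathrm{Mov}(u)\subseteq\sum_j\Rset\gamma_j$, while the paper gets it from $(u+1)|_{V^\perp}=2\,\id$ and $u$-stability of $V$). Where you genuinely diverge is in the forward implication. The paper expands $s_{\gamma_1}\cdots s_{\gamma_d}(\gamma_j)$ via the iterated reflection formula, reads off the coefficient of $\gamma_d$ from the equation $u(\gamma_j)=-\gamma_j$ to conclude $\gamma_d\perp\gamma_j$ for $j\neq d$, and then commutes $s_{\gamma_d}$ to the front and repeats; this is a direct computation that handles all pairs involving $\gamma_d$ at once and then iterates. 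You instead run an induction on $d$, peeling off the single reflection $s_{\gamma_d}$, showing that $w'|_V=-s_{\gamma_d}|_V$ forces $\dim\Ker(w'+1)=d-1$ so the inductive hypothesis applies to $\gamma_1,\dots,\gamma_{d-1}$, and then recovering the remaining orthogonalities $\gamma_d\perp\gamma_j$ from the decomposition $\gamma_d=p+q$ with $p\in V'=\Ker(w'+1)$, $q\in V'^\perp=\mathrm{Fix}(w')$. Your approach is more structural and avoids bookkeeping with the explicit expansion of the product of reflections; the paper's is shorter because one expansion yields all of $\gamma_d\perp\gamma_j$ simultaneously. You are also right to flag that the naive ``peel from both ends'' induction misses the pair $(\gamma_1,\gamma_d)$; your decomposition argument is the correct fix.
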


\begin{proof} 
    Denote $V := \text{span}\{\gamma_{1},\dots,\gamma_{d}\}$ and $u := s_{\gamma_1} \ldots s_{\gamma_d}$.
Since $s_{\gamma_{j}}|_{V^{\bot}} = \id|_{V^{\bot}}$ for all $1 \leq j \leq d$,  
\begin{equation}
    \label{eq:2}
    (u+1)|_{V^{\bot}} = 2 \id_{V^{\bot}}.
\end{equation}
Since $V$ is stable under $u$ and the pairing $(\cdot, \cdot)$ is non-degenerate, 
\begin{equation} \label{inclusion}
\Ker(u+1) \subseteq V.
\end{equation} 

Assume now that $\dim \Ker(u+1) = d$. Thus~\eqref{inclusion} implies that $V = \Ker(u+1)$ and hence 
\[
    s_{\gamma_{1}}\dots s_{\gamma_{d}}(\gamma_{j})  = -\gamma_{j},\quad \forall 1\leq j \leq d.
\]
Then we have: 
\begin{align*}
        s_{\gamma_{1}}\dots s_{\gamma_{d}}(\gamma_{j}) &= s_{\gamma_{2}}\dots s_{\gamma_{d}}(\gamma_{j}) - \frac{ 2 (s_{\gamma_{2}}\dots s_{\gamma_{d}}(\gamma_{j}),\gamma_{1})}{ \| \gamma_1 \|^2} \gamma_{1}\\
        &= \dots  = \gamma_{j} - \sum_{i=1}^{d}\frac{2 (s_{\gamma_{m+1}}\dots s_{\gamma_{d}}(\gamma_{j}),\gamma_{m})}{ \| \gamma_m \|^2} \gamma_{m}.
\end{align*}
The coefficient of $\gamma_{d}$ is $2 (\gamma_j, \gamma_d)/ (\gamma_d, \gamma_d)$, and the linear independence of $\gamma_{1},\dots, \gamma_{d}$ implies $\gamma_{d} \bot \gamma_{j}, \, \forall j\neq d$. 

We can commute $s_{\gamma_{d}}$ to the left
($  s_{\gamma_{1}}\dots s_{\gamma_{d}} =  s_{\gamma_{d}} s_{\gamma_{1}}\dots s_{\gamma_{d-1}}$) and iterate the procedure to show that $\gamma_{j} \bot \gamma_{k}, \, \forall j \neq k$.

Conversely, the assumption $\gamma_j \bot \gamma_k$, $\forall j \neq k$ implies that $\gamma_{1},\ldots, \gamma_{d}$ are linearly independent.
Since 
\[
s_{\gamma_k}(\gamma_j)= 
\begin{cases}
\gamma_j, &\mbox{if} \; \; j \neq k
\\
- \gamma_j, &\mbox{if} \; \; j=k,
\end{cases}
\]
$s_{\gamma_{1}} \dots s_{\gamma_{d}} (\gamma_{j})= - \gamma_j$, $\forall 1 \leq j \leq d$. Hence by~\eqref{inclusion} we get $V = \Ker(s_{\gamma_{1}} \dots s_{\gamma_{d}}+1)$ and thus $\dim \Ker(s_{\gamma_{1}} \dots s_{\gamma_{d}}+1) = d$ as $\gamma_1, \ldots , \gamma_d$ forms a basis of $V$. 

\end{proof}

 \bco{cor : involution orth}
 Let $u$ be an involution in $W$ and let $d := l_{\Delta}(u)$. Then for any collection $\gamma_1, \ldots , \gamma_d \in \Delta$ such that $u=s_{\gamma_1} \ldots s_{\gamma_d}$, we have that $\gamma_j \bot \gamma_k, \forall j \neq k$.
 \eco

\begin{proof}
Let $\gamma_1, \ldots , \gamma_d$ such that $u=s_{\gamma_1} \ldots s_{\gamma_d}$. As $u$ is involutive, we have that
$$ \dim \Ker(u+1) = n - \dim \Ker(u-1) = n-(n-l_{\Delta}(u)) = d $$
using Lemma~\ref{lem : refl length}, and hence Lemma~\ref{l:orth} implies that $\gamma_j \bot \gamma_k,  \forall j \neq k$.
\end{proof}
\subsection{The reduced Poisson degeneracy locus $D_0(G/B_+,\pi)_{\redd}$ and open Richardson varieties}
Let $w\in W$ and $\mathbf{w}$ be a reduced word of $w$. For every $v \leq w$ there exists a {\em{reduced subword}} of $\mathbf{w}$ 
with value $v$; it necessarily has length $l(v)$. This subword is not unique: for example, for $w := s_1 s_2 s_1$ and the reduced word $\mathbf{w}:=(1,2,1)$, 
there exist two reduced subwords with value $v=s_1$. The subword of $\mathbf{w}$ with value $v$ is unique \cite[Lemma 3.5]{MarshRietsch} if one requires the stronger property that the subword be {\em{positive}} in the sense of \cite[Definition 3.4]{MarshRietsch}. 

Theorem \ref{t:sympl-leaves} shows that the 
reduced Poisson degeneracy locus $D_0(G/B_+,\pi)_{\redd}$ is stratified by open Richardson varieties. The next theorem characterizes the open Richardson varieties that lie inside $D_0(G/B_+,\pi)_{\redd}$.    
\bth{t:v-w-equiv}
Let $v, w \in W$, $v \leq w$ and $\pi$ be the standard Poisson structure on $G/B_+$ for a connected simply connected complex semisimple Lie group $G$. Fix a reduced word $(i_1, \ldots, i_{l(w)})$ of $w$ and denote
\[
d := l(w) - l(v).
\]
The followings are equivalent:
\begin{enumerate}
    \item $\pi|_{\Rvw} = 0$;
    \item $\pi|_{\oRvw} = 0$;
    \item $\dim\Ker(vw^{-1}+1) = l(w) - l(v)$;
    \item $(v w^{-1})^2 =1$ and $l_\Delta(v w^{-1}) = l(w) - l(v)$;
    \item $v = s_{\gamma_1} \dots s_{\gamma_d} w$ for some $\gamma_1, \ldots, \gamma_d \in \Delta_+$ such that $\gamma_j \bot \gamma_k, \forall j \neq k$;
    \item there exits a reduced subword $(i_1, \ldots , \widehat{i}_{p_1} , \ldots , \widehat{i}_{p_d} , \ldots , i_l)$ of $\mathbf{w}$ with value $v$ 
    for some $1 \leq p_{1} < \dots < p_{d} \leq l$ such that $\beta_{p_{j}} \bot \beta_{p_{j}}$, $\forall j \neq k$, recall the notation \eqref{eq:beta-k}; by Lemma \ref{l:Bruhat},
    $v = s_{\beta_{p_{1}}} \dots s_{\beta_{p_{d}}} w$.
    \end{enumerate}
\eth
\begin{proof} (1)$\Leftrightarrow$(2) because the Poisson structure $\pi$ is algebraic. 

(2)$\Leftrightarrow$(3) follows by combining Theorem \ref{t:sympl-leaves}(ii) and Lemma \ref{l:ker}.

(3)$\Rightarrow$(6) There exits a reduced subword $(i_1, \ldots , \widehat{i}_{p_1} , \ldots , \widehat{i}_{p_d} , \ldots , i_{l(w)})$ 
of the given reduced word of $w$ with value $v$, where $1 \leq p_{1} < \dots < p_{d} \leq l(w)$. 
It follows from Lemma \ref{l:Bruhat} that 
$v = s_{\beta_{p_{1}}} \dots s_{\beta_{p_{d}}} w$.
Condition (3) implies that 
\[
\dim \Ker (s_{\beta_{p_1}} \ldots s_{\beta_{p_d}} +1) = 
\dim \Ker (v w^{-1} +1 ) = l(w) -l(v) = d.
\]
By Lemma \ref{l:orth},
$\beta_{p_{k}} \bot \beta_{p_{m}}, \, \forall k \neq m$.

(6) $\Rightarrow$ (5) is obvious in view of the identity $v = s_{\beta_{p_{1}}} \dots s_{\beta_{p_{d}}} w$ from Lemma \ref{l:Bruhat}.

(5) $\Rightarrow$ (4) $v w^{-1} = s_{\gamma_1} \dots s_{\gamma_d}$. Since 
$\gamma_j \bot \gamma_k$, we have $s_{\gamma_j} s_{\gamma_k} = s_{\gamma_k} s_{\gamma_j}$
for all $j \neq k$. By Lemma \ref{lem : refl length}, $l_\Delta(v w^{-1})= d = l(w) - l(v)$. 

(4)$\Rightarrow$(3) Since $(v w^{-1})^2 =1$, 
\[
\dim\Ker(vw^{-1}+1) = n - \dim\Ker(vw^{-1} -1) = l_\Delta( vw^{-1}) = l(w) - l(v) 
\]
where in the second equation we used Lemma \ref{lem : refl length}. 
\end{proof}
\subsection{The index set $\GCR(W) \subset W \times W$}
\bde{d:GCR}
Denote by 
\[
\GCR(W)
\]
the collection of pairs $(v,w) \in W \times W$ that satisfy the equivalent conditions in the Theorem \ref{t:v-w-equiv}.
\ede

Obviously, the diagonal of $W \times W$ sits inside $\GCR(W)$: 
\begin{equation}
\label{diag-W}
\{ (w,w) \mid w \in W \} \subseteq \GCR(W). 
\end{equation}
The next lemma shows that this is the case for all pairs in the covering relation of the Bruhat order.
\ble{l:diff-one} If $v, w \in W$ and $v \lessdot w$, i.e., $v \leq w$ and $l(v) = l(w) -1$, then 
\[
(v,w) \in \GCR(W). 
\] 
\ele
\begin{proof}
By Lemma \ref{l:Bruhat}, 
\[
v = s_\beta w
\]
for some $\beta \in \Delta_+^w$. Both conditions (3) and (4) in Theorem \ref{t:v-w-equiv} are obviously satisfied, so 
$(v, w) \in \GCR(W)$.  
\end{proof}
\bre{r:gen-cov}
The notation $\GCR(W)$ emphasizes that this set is an {\em{extended covering relation}} for the Bruhat order 
on $W$. 
\ere

The Poisson geometric interpretation of $\GCR(W)$ gives the following two properties:
\ble{l:leq-GCR}
    If $(v, w) \in \GCR(W)$ and $v\leq v'\leq w'\leq w \in W$ then $(v', w') \in \GCR(W)$.
\ele
\begin{proof} By Theorem, \ref{t:v-w-equiv}
\[
\pi|_{\Rvw}. 
\]
Eq. \eqref{eq:clos-R} implies that 
\[
\pi|_{\mathcal{R}_{v'}^{w'}} =0,
\]
and applying again Theorem \ref{t:v-w-equiv} gives $(v', w') \in \GCR(W)$.    
\end{proof}
\subsection{$T$-orbit stratifications of reduced Poisson degeneracy loci}
The action of the maximal torus $T$ on the flag variety $(G/B_+, \pi)$ is Poisson. Therefore the reduced Poisson degeneracy locus $D_0(G/B_+,\pi)_{\redd}$ is stable under the action of $T$. The next result describes its stratification into $T$-orbits.   
\bco{c:T-orbit-stratification}
\hfill 
\begin{enumerate}
\item[(i)] For all $(v,w) \in \GCR(W)$, the action of $T$ on $\oRvw$ is transitive.
\item[(ii)] The stratification of the reduced Poisson degeneracy locus of $D_0(G/B_+,\pi)_{\redd}$ into $T$-orbits is given by
\[
D_0(G/B_+,\pi)_{\redd} = \bigsqcup_{(v,w) \in \GCR(W)} \oRvw. 
\]
\item[(iii)] The reduced Poisson degeneracy locus of $D_0(G/B_+,\pi)_{\redd}$ is connected. 
\end{enumerate}
\eco
\begin{proof} (i) By Theorem \ref{t:sympl-leaves}(i), for all $v, w \in W$, $v \leq w$, $\oRvw$ is a single $T$-orbit of symplectic leaves and by Theorem \ref{t:sympl-leaves}(ii), for $(v,w) \in \GCR(W)$, the symplectic leaves of $\oRvw$ are points. Therefore the $T$-action on $\oRvw$ is transitive for $(v,w) \in \GCR(W)$. 

(ii) This part follows from (i) and Theorems \ref{t:sympl-leaves} and \ref{t:v-w-equiv}.

(iii) This follows from Lemma \ref{l:diff-one}.
\end{proof}
\bre{r:T-act}
It is not true that the action of $T$ on $\oRvw$ is transitive (or equivalently that $\Rvw$ is a toric variety) only if $(v,w) \in \GCR(W)$.
Consider the case when $v=1$ and $w$ equals a Coxeter element 
\[
w = s_{i_1} \ldots s_{i_n},
\]
$i_j \neq i_k$ for $j \neq k$. In this case the roots
$\{ \beta_1, \ldots, \beta_n\}$ given by \eqref{eq:beta-k} form a basis of $Q$ (see for instance  {{\cite[Chap. VI, §1, Proposition 33]{Bourbaki}}}). 
Every element of $\mathcal{R}_1^w$ can be uniquely written in the form 
$$u_{\beta_1}(z_1) \ldots u_{\beta_n}(z_n)wB_+/B_+$$
with $z_1, \dots , z_n \in \mathbb{C}^\times$. The action of $T$ on $\mathcal{R}_1^w$ is transitive because the homomorphism $T \to (\Cset^\times)^n$, given by  
$$ t \in T \mapsto \left( \beta_1(t) , \ldots , \beta_n(t) \right)$$
is surjective by the above stated property of $\{\beta_1, \ldots, \beta_n\}$.
\ere
 
\sectionnew{The open and closed Richardson varieties
in the reduced Poisson degeneracy locus of a flag variety}
In this section we prove that the closed 
Richardson varieties that belong to the reduced Poisson degeneracy locus $D_0(G/B_+,\pi)_{\redd}$ of any flag variety $G/B_+$ are isomorphic to $(\cp)^d$ for some $d \in \Nset$. In other words, this shows that all 
$T$-orbit closures in $D_0(G/B_+,\pi)_{\redd}$ are isomorphic to $(\cp)^d$, 
recall Corollary \ref{c:T-orbit-stratification}. Simultaneously, we prove that the Bruhat intervals $[v,w]$ for all Richardson varieties $\Rvw$ in $D_0(G/B_+, \pi)$ are isomorphic to power sets.

Furthermore, we describe the irreducible components of $D_0(G/B_+,\pi)_{\redd}$, which are shown to be isomorphic to $(\cp)^d$ for positive integers $d$ which are not necessarily the same, i.e., $D_0(G/B_+,\pi)_{\redd}$ is not equidimensional in general. This is illustrated for the cases of the full flag varieties of $SL_3(\Cset)$ and $SL_4(\Cset)$. It is shown 
that even in those simplest cases the Poisson degeneracy locus $D_0(G/B_+,\pi)$ is not reduced.
\subsection{Notation}
\label{sec:nota}
Throughout the section we fix a pair of Weyl group elements 
\[
(v, w) \in \GCR(W)
\]
and set 
\[
l:= l(w), \quad d:= l(w) - l(v), \quad
[1,d] := \{1, \ldots, d \}. 
\]
Fix also fix a reduced word $\mathbf{w}:=(i_1, \ldots , i_l)$ of $w$. By condition (6) in Theorem \ref{t:v-w-equiv}, there exits a reduced subword 
\begin{equation}
    \label{eq:v-word}
\mathbf{v} :=
(i_1, \ldots , \widehat{i}_{p_1} , \ldots , \widehat{i}_{p_d} , \ldots , i_l) 
\end{equation}
of $w$ with value $v$ for some $1 \leq p_{1} < \dots < p_{d} \leq l$ 
such that 
\begin{equation}
\label{eq:beta-orth}
\beta_{p_{j}} \bot \beta_{p_{k}}, \quad \forall j \neq k,
\end{equation}
recall the notation \eqref{eq:beta-k}. By Lemma \ref{l:Bruhat}, 
\[
v = s_{i_1} \ldots \widehat{s}_{i_{p_1}}  \ldots \widehat{s}_{i_{p_d}} \ldots s_{i_l} =  s_{\beta_{p_{1}}} \dots s_{\beta_{p_{d}}} w. 
\]
Define recursively 
\[
v_{\leq k} = 
\begin{cases}
v_{\leq k-1} s_{i_k}, &\mbox{if $i_k$ appears in $\mathbf{v}$} 
\\
v_{\leq k-1}, &\mbox{otherwise}.
\end{cases} 
\]
Since $\mathbf{v}$ is a reduced word with value $v$, 
\begin{equation}
\label{eq:Delta-v}
\Delta_+^v = \{ v_{\leq m-1} (\alpha_{i_m}) \mid 1 \leq m \leq l, m \neq p_1, \ldots, p_d \}.
\end{equation}

For a subset 
\[
K := \{k_1, \ldots k_t \}\subseteq [1,d]
\]
denote the subword of $\mathbf{w}$
\begin{equation}
\label{eq:bfw-k}
\mathbf{w}_K := (i_1, \ldots, \widehat{i}_{p_{k_1}}, \ldots \widehat{i}_{p_{k_t}}, \ldots, i_l)
\end{equation}
with value
\begin{equation}
\label{eq:w_K}
w_K:= s_{i_1} \ldots \widehat{s}_{p_{k_1}}, \ldots 
\widehat{s}_{p_{k_t}} \ldots s_{i_l} = 
\left( \textstyle \prod_{j \in K} s_{\beta_{p_j}}
\right) w. 
\end{equation}
Denote 
\begin{equation}
\label{eq:v_K}
v_K := \left( \textstyle \prod_{j \in K} s_{\beta_{p_j}}
\right) v = w_{[1,d] \backslash K}.
\end{equation}
Clearly
\[
\mathbf{v} = \mathbf{w}_{[1,d]}, \quad
v = w_{[1,d]} 
\]
and
\[
\mathbf{w} = \mathbf{w}_{\varnothing}, \quad
w = v_{[1,d]}.
\]

For all $K \subseteq [1,d]$, 
\[
\Ker (w_Kw^{-1} + 1) = \Span \{ \beta_{p_j} \mid j \in K \}.  
\]
Since $\{\beta_{p_j} \mid j \in [1,d]\}$ are linearly independent we obtain the following:
\ble{l:distinct-w_K}
For all $J, K \subseteq [1,d]$, $J \neq K$, 
\[
w_J \neq w_K.
\]
\ele
\subsection{Properties of the pairwise orthogonal roots associated to $(v,w) \in \GCR(W)$}
\bpr{lemma : positive roots and orderings} For each pair $(v, w) \in \GCR(W)$ as in condition (6) of Theorem \ref{t:v-w-equiv}, the following properties hold:
 \begin{enumerate}
     \item[(i)] For every $1 \leq k \leq d$, one has $v^{-1}(\beta_{p_k}) \in \Delta_{+}$.
     \item[(ii)] For all $1 \leq j < k \leq d$, one has $\beta_{p_k} - \beta_{p_j} \notin - \Delta_+$.
     \item[(iii)] For all $1 \leq j < k \leq d$, one has $v^{-1}(\beta_{p_k} - \beta_{p_j}) \notin \Delta_+$.
\end{enumerate}
    \epr

 \begin{proof}
For $1 \leq k \leq d$ we have 
 \begin{align*}
     v^{-1}(\beta_{p_k}) = w^{-1} \left( \textstyle \prod_{j=1}^d  s_{\beta_{p_j}}\right)(\beta_{p_k}) = - w^{-1}(\beta_{p_k})
 \end{align*}
  by the pairwise orthogonality of $\beta_{p_1}, \ldots , \beta_{p_d}$. As $\beta_{p_k} \in \Delta_+^w$, we have that $w^{-1}(\beta_{p_k}) \in -\Delta_+$, and thus $v^{-1}(\beta_{p_k}) \in \Delta_+$ 
  which proves (i). 

   Fix now $j,k$ with $1 \leq j < k \leq d$. For a proof of (ii) and (iii) by contradiction, assume that $\beta_{p_k} - \beta_{p_j} \in \Delta$. Then
  $$ \beta_{p_j} + \beta_{p_k} =  s_{\beta_{p_j}}(\beta_{p_k}- \beta_{p_j}) \in \Delta $$
    and hence by convexity of $\Delta_+^w$, we have that 
   $$ \beta_{p_j} + \beta_{p_k} \in \Delta_+^w  \quad \text{and} \quad \beta_{p_j} \prec_w \beta_{p_j} + \beta_{p_k} \prec_w \beta_{p_k}.$$
   Therefore there exists an index $t$ such that $p_j < t < p_k$ and 
    \begin{equation} \label{eqn for betaj + betak}
    \beta_{p_j} + \beta_{p_k}= \beta_t= w_{\leq t-1}(\alpha_{i_t})= s_{i_1} \ldots s_{i_t-1}(\alpha_{i_t}) . 
    \end{equation}
   Using Lemma~\ref{l:Bruhat}, we get 
   $$ \beta_{p_k} - \beta_{p_j} = s_{\beta_{p_j}} s_{i_{p_1}} \ldots s_{i_t-1}(\alpha_{i_t}) =  s_{i_1} \cdots \widehat{s}_{i_{p_j}} \cdots s_{i_{t-1}}(\alpha_{i_t}).$$
   Applying again Lemma~\ref{l:Bruhat} we get 
   $$ \beta_{p_k} - \beta_{p_j} =  \left( \textstyle \prod_{m \neq j, p_m <t} s_{\beta_{p_m}} \right) 
   \delta, \quad \text{where} \quad \delta :=v_{\leq i_{t-1}}(\alpha_{i_t}). $$
   As $k>s$, the reflections involved in the product are of the form $s_{\beta_{p_m}}$ with $m \neq j,k$. Hence by the pairwise orthogonality of of $\beta_{p_1}, \ldots , \beta_{p_d}$, 
   they leave $\beta_{p_k} - \beta_{p_j}$ invariant, and thus we obtain 
   $$ \beta_{p_k} - \beta_{p_j} = \delta.$$ 
   Now we claim that $\delta \in \Delta_+^v.$
   First, note that 
   \begin{equation}
   \label{eq:t-p1-d}
   t \notin \{p_1, \ldots , p_d \}. 
   \end{equation}
   Indeed, if $t \in \{p_1, \ldots , p_d \}$, then $w_{\leq t-1}(\alpha_{i_t})$ would be one of the roots $\beta_{p_1}, \ldots , \beta_{p_d}$ other than $\beta_{p_j}$ and $\beta_{p_k}$ (as $p_j<t<p_k$). Hence it would be orthogonal to $\beta_{p_j}$ and $\beta_{p_k}$ which is impossible because \eqref{eqn for betaj + betak} would imply that $(\beta_t, \beta_t) = (\beta_t, \beta_{p_k} + \beta_{p_k}) =0$. Combining \eqref{eq:Delta-v} and \eqref{eq:t-p1-d} gives that $\beta_{p_k} - \beta_{p_j} = \delta \in \Delta_+^v$. 
   
In other words, the assumption $\beta_{p_k} - \beta_{p_j} \in \Delta$ implies that 
$\beta_{p_k} - \beta_{p_j} \in \Delta_+^v$, which proves (ii) and (iii). 
 \end{proof}

\subsection{A map from $(\cp)^d$ to $G/B_+$.}
 For all $(v, w) \in \GCR(W)$ as in the previous subsection, define the following map: 
$$
  \begin{array}{cccc}
 \wt{\kappa} : & L_{\beta_{p_1}} \times \cdots \times  L_{\beta_{p_d}} & \longrightarrow & G/B_+, \\
 {}  & (g_1, \ldots , g_d) & \longmapsto & g_1 \ldots g_d v B_+/B_+ .
 \end{array}
 $$
The next lemma introduces a quotient map  $\kappa$ induced by $\wt{\kappa}$ which will eventually provide the desired closed embedding of algebraic varieties.  

   \bpr{p: j well-def}
   For all $(v, w) \in \GCR(W)$
       the map $\wt{\kappa}$ induces a map 
 \begin{equation}
     \label{eq:kappa}
 \kappa : L_{\beta_{p_1}}/B_{\beta_{p_1}} \times \cdots \times L_{\beta_{p_d}}/B_{\beta_{p_d}} \longrightarrow G/B_{+} . 
 \end{equation}
    \epr

    \begin{proof}
 In order to show that $\wt{\kappa}$ into a well-defined map with domain $L_{\beta_{p_1}}/B_{\beta_{p_1}} \times \cdots \times L_{\beta_{p_d}}/B_{\beta_{p_d}}$, we need to show that 
 \[
 g_1b_1 \cdots g_db_d v B_{+}/B_{+} = g_1 \cdots g_d v B_{+}/B_{+}.
 \]
 We write $b_k = u_k t_k$ with $t_k \in T \cap L_{\beta_{p_k}}$ and $u_k \in U_{\beta_{p_k}}$ for each $1 \leq k \leq d$. Then $t_k$ commutes with $U_{ \pm \beta_{p_m}}$ provided $m \neq k$, and hence it remains to prove that 
 \[
 g_1u_1 \ldots g_du_d v B_{+}/B_{+} = g_1 \cdots g_d v B_{+}/B_{+}.
 \]
 We have 
 $$ u_dvB_{+}/B_{+} = v (v^{-1}u_dv)B_{+}/B_{+} = v B_{+}/B_{+} $$
 because $v^{-1}u_dv \in U_{v^{-1}(\beta_{p_d})} \subset U_{+}$ by Proposition~\ref{lemma : positive roots and orderings}(i). We claim that 
 \begin{equation}
\label{eq:ind step} 
 g_1u_1 \ldots g_ku_k g_{k+1} \cdots g_dvB_{+}/B_{+} = g_1u_1 \cdots g_{k-1}u_{k-1} g_k g_{k+1} \cdots g_dvB_{+}/B_{+}.
 \end{equation}
 The statement of the proposition will then follow by induction. Eq. \eqref{eq:ind step} is equivalent to 
\begin{equation}
\label{eq:ind step 2}
\dot{v}^{-1}g'^{-1}u_kg' \dot{v} \in B_{+}
\end{equation}
where $g' := g_{k+1} \cdots g_d$ and $\dot{v}$ is any fixed representative of $v$ in $N_G(T)$.

   Using the commutation rule \cite[Prop. 8.2.3]{Springer}, we have that 
   \begin{equation}
       \label{eq:comm_rel}
   (g')^{-1}u_k g' = u_k u' \quad \text{with} \enspace u' \in \prod_{ \substack{ m_k , m_{k+1}, \ldots , m_d \in \mathbb{Z} \\ m_k  >0 } } U_{m_k \beta_{p_k} + \ldots + m_d \beta_{p_d}} . 
   \end{equation}
   However, 
   \[
   \|m_k \beta_{p_k} + \ldots + m_d \beta_{p_d}\|^2 =
   m_k^2 \| \beta_{p_k} \| + \cdots 
   + m_d^2 \| \beta_{p_d} \|^2. 
   \]
   If $m_k \beta_k + \cdots + m_d \beta_d \in \Delta$, then either
   \begin{enumerate}
\item $m_k=1$ and $m_j = 0$ for all $j \neq k$ or \item $m_k=1$, $m_{k'}=\pm 1$ for some $k < k' \leq d$ and $m_j = 0$ for all $j \neq k, k'$.
   \end{enumerate}
   In the first case, by Proposition~\ref{lemma : positive roots and orderings}(i) we have $v^{-1}(\beta_{p_k}) \in \Delta_{+}$, which proves \eqref{eq:ind step 2}. Consider the second case. If $\beta_{p_k} - \beta_{p_{k'}} \in \Delta$, then 
   $$ v^{-1}(\beta_{p_k} - \beta_{p_{k'}})\in \Delta_{+} $$
   by Lemma~\ref{lemma : positive roots and orderings}(iii). 
   If $\beta_{p_k} + \beta_{p_{k'}} \in \Delta$, then $\beta_{p_k}- \beta_{p_{k'}}= s_{\beta_{p_{k'}}}(\beta_{p_k} + \beta_{p_{k'}}) \in \Delta$ and again by Lemma~\ref{lemma : positive roots and orderings}(i,iii),
   \[
   v^{-1} (\beta_{p_{k'}}), v^{-1}(\beta_{p_k} - \beta_{p_{k'}}) \in \Delta_+.
   \]
   Hence, $v^{-1}(\beta_{p_k} + \beta_{p_{k'}}) \in \Delta_+$ and in all cases $u' \in B_+$, so \eqref{eq:ind step 2} follows from \eqref{eq:comm_rel}.  
    \end{proof}
\subsection{A stratification of $(\cp)^d$ and injectivity of $\kappa$}
\label{section : stratification}
The domain of $\kappa$ admits the following decomposition:
  \begin{equation} \label{eq : stratif of domain of kappa}
 L_{\beta_{p_1}}/B_{\beta_{p_1}} \times \cdots \times L_{\beta_{p_d}}/B_{\beta_{p_d}} = \bigsqcup_{ \substack{ J,K \subset [1,d] \\ J \cap K = \varnothing }} A_{J,K}, 
 \end{equation}
 where for each pair $(J,K)$ of disjoint subsets of $[1,d]$ the subset $A_{J,K}$ of  $L_{\beta_{p_1}} / B_{\beta_{p_1}} \times \cdots \times L_{\beta_{p_d}} / B_{\beta_{p_d}}$  is defined as follows :
  \begin{align} \label{def AJK}
  A_{J,K} := \big\{ &\big(g_1 B_{\beta_{p_1}}/B_{\beta_{p_1}},  \ldots ,  g_d B_{\beta_{p_d}} /B_{\beta_{p_d}} \big)\mid 
  \mbox{ $g_k = 1$ for $k \in K$}, 
  \\
  &\mbox{$g_k = \dot{s}_{\beta_{p_k}}$ for $k \in J$,
  $g_k \in U_{- \beta_{p_k}}^{\times}$ otherwise} \big\}.  
  \nn
  \end{align}
  The next statement allow us to identify this decomposition with the  stratification of the Richardson varieties $\Rvw$ for $(v, w) \in \GCR(W)$ as orbits for the natural $T$-action on $G/B_{+}$, recall Corollary \ref{c:T-orbit-stratification}(ii).
  
  \bpr{prop: inclusion general case}
      For all $(v,w) \in \GCR(W)$ the following hold for the map $\kappa$ defined in \eqref{eq:kappa}: 
      \begin{enumerate}
      \item[(i)] For $J,K \subseteq [1, d]$ with $J \cap K = \varnothing$,
      \[
      \kappa(A_{J,K}) \subset  \mathcal{R}_{v_J}^{w_K},
      \]
      recall the notation \eqref{eq:w_K} and \eqref{eq:v_K}.
      \item[(ii)] The map $\kappa$ is injective.
      \end{enumerate}
      \epr

 \begin{proof} Let $g_k \in L_{\beta_{p_k}}$ be as in \eqref{def AJK} for $1 \leq k \leq d$, and 
\[
 x:= \big(g_1 B_{\beta_{p_1}}/B_{\beta_{p_1}},  \ldots ,  g_d B_{\beta_{p_d}} /B_{\beta_{p_d}} \big).
\] 
Denote
\[
[1,d] \backslash (K \sqcup J) =
\{t_1, \ldots, t_m\}.
\]
Pulling the terms $g_k,$ $k \notin K \sqcup J$
in $g_1 \ldots g_d v B_+/B_+= \kappa(x)$ to the left gives that
\[
\kappa(x) = u_1 \ldots u_m v_J B_+/ B_+
\]
where
\[
u_q = \Big( \prod_{k \in J, k < q} s_{\beta_{j_k}} \Big)
g_q \Big( \prod_{k \in J, k < q} s_{\beta_{j_k}} \Big)^{-1} \in U_{-\beta_{j_q}}.
\]
The inclusion follows from the fact that the roots $\beta_{p_k}$ are pairwise orthogonal for $1 \leq k \leq d$. Since the vectors $\beta_{p_k}$ are linearly independent, this implies that
\begin{equation}
\label{eq:inclu-kappa}
\kappa(A_{J,K}) \subseteq B_- v_J B_+/B_+
\end{equation}
and that the restriction 
\begin{equation}
\label{eq:kappa-restr}
\kappa|_{A_{J,K}} \; \; \mbox{is injective}.
\end{equation}

 Recalling \eqref{eqn: important identity}, the assumption on the elements $g_k \in L_{\beta_{j_k}}$ implies that
\[
 x:= \big(g'_1 B_{\beta_{p_1}}/B_{\beta_{p_1}},  \ldots ,  g'_d B_{\beta_{p_d}} /B_{\beta_{p_d}} \big)
\] 
where $g'_k = g_k =1$ for $k \in K$, $g'_k = g_k = \dot{s}_{\beta_k}$ for $k \in J$, and $g'_k = u'_k s_{\beta_{j_k}}$ for $k \notin K \sqcup J$ with 
$u'_k \in U_{ \beta_{j_k}}$. Pulling the terms $u'_k,$ $k \notin K \sqcup J$
in $g'_1 \ldots g'_d v_J B_+/B_+= \kappa(x)$ to the left gives that
\[
\kappa(x) = u''_1 \ldots u''_m w_K B_+/ B_+,
\]
where
\[
u''_q = \Big( \prod_{k \notin K, k < q} s_{\beta_{j_k}} \Big)
u'_q \Big( \prod_{k \notin K, k < q} s_{\beta_{j_k}} \Big)^{-1} \in U_{\beta_{j_q}}.
\]
(Once again, the inclusion follows from the fact that the roots $\beta_{p_k}$ are pairwise orthogonal for $1 \leq k \leq d$.) Therefore
\begin{equation}
    \label{eq:inclu-kappa2}
\kappa(x) \in B_+ w_K B_+/B_+.
\end{equation}

Part (i) of the proposition follows from Eqs. \eqref{eq:inclu-kappa}
and \eqref{eq:inclu-kappa2}. 

Lemma \ref{l:distinct-w_K} implies that the images of the restrictions 
$\kappa|_{A_{J,K}}$ are disjoint. This fact and Eq. \eqref{eq:kappa-restr}
imply part (ii) of the proposition.
\end{proof}

\subsection{Isomorphism property of $\kappa$}
Next we prove that $\kappa$ is an isomorphism between 
\[
L_{\beta_{p_1}}/B_{\beta_{p_1}} \times \cdots \times L_{\beta_{p_d}}/B_{\beta_{p_d}} \quad \mbox{and} \quad \Rvw
\]
for all $(v, w) \in \GCR(W)$. First, we describe the image of the restrictions of $\kappa$
to the strata $A_{J,K}$ of $(\cp)^d$. Note that, 
Proposition \ref{prop: inclusion general case}(i) and Lemma \ref{l:leq-GCR} imply that for $(v,w) \in \GCR(W)$,
\begin{equation}
\label{eq:vJwK}
(v_J,w_K) \in \GCR(W), \quad \forall  J,K \subseteq [1,d], \; J \cap K = \varnothing. 
\end{equation}

 \bpr{prop : equality}
  For any $(v,w) \in \GCR(W)$ and  for any pair $J,K$ of disjoint subsets of $\{1, \ldots , d \}$, in the notation of~\eqref{def AJK} we have that 
  $$ \kappa(A_{J,K}) = \mathcal{R}_{v_J}^{w_K} . $$
 \epr

 We start with the following auxiliary lemma. 

\ble{l:uniq-sub}
Let $(v, w) \in \GCR(W)$ and let $J,K$ be two disjoint subsets of $\{1, \ldots , d \}$ as in Section \ref{sec:nota}. Let $\mathbf{w}_K$ denote the reduced expression of $w_K$ obtained by deleting the letters $i_{p_k}, k \in K$. Then there is a unique distinguished subword of $\mathbf{w}_K$ with value $v_J$. In other words, there is a unique Deodhar stratum of $\mathcal{R}_{v_J}^{w_K}$ in the decomposition \eqref{Deodhar}. 
\ele
\begin{proof} 
Denote by $\mathbf{v}_J$ the unique positive subword of $\mathbf{w}_K$ with value $v_J$ (recall that the uniqueness is guaranteed by \cite[Lemma 3.5]{MarshRietsch}) and let $\mathbf{v}'$ be any other distinguished subword of $\mathbf{w}_K$ with value $v_J$. Then we have that $m(\mathbf{v}') \neq 0$ so recalling~\eqref{eq : distinguished}  and using Lemma~\ref{lem : refl length less than n} we get
$$  l_{\Delta}(v_J w_K^{-1}) \leq n(\mathbf{v}') = l(w_K)-l(v_J) - 2 m(\mathbf{v}') < l(w_K)-l(v_J).  $$
Eq. \eqref{eq:vJwK} implies that 
$$l_{\Delta}(v_Jw_K^{-1}) = l(v_J)-l(w_K),$$
which is a contradiction.
\end{proof}

Lemma \ref{l:uniq-sub} and \cite[Theorem 1.3]{Deodhar} imply the following:
\bco{R-polynomial} For all $(v,w) \in \GCR(W)$, the associated Kazhdan--Lusztig $R$-polynomial is 
    \[
    R_{v,w}(q) = (q-1)^{l(w)-l(v)}.
    \]
\eco

   \begin{proof}[Proof of Proposition~\ref{prop : equality}]
 Let $x \in \mathcal{R}_{v_J}^{w_K}$. By the previous lemma, $x$ belongs to the open Deodhar stratum associated to $(\mathbf{v}_J, \mathbf{w}_K)$. This stratum is characterised in {{\cite[Propositions 5.2]{MarshRietsch}}}. From this result we obtain that $x$ can be written as 
 $$ x= \dot{s}_{i_1} \ldots \dot{s}_{i_{p_1-1}} u_1 \dot{s}_{i_{p_1+1}} \ldots \dot{s}_{i_{p_d-1}}u_d \dot{s}_{i_{p_d+1}} \ldots \dot{s}_{i_l} B_+ $$
 where for each $1 \leq j \leq d$ we have
 $$ u_j  
  \begin{cases}
      =1, &\text{if $j \in K$} \\
      =\dot{s}_{i_{j}}, & \text{if $j \in J$} \\
      \in  U_{-\alpha_{i_j}}^{\times}, & \text{otherwise.}
\end{cases}
 $$
 Hence, 
 $$ x = \tilde{u}_1 \ldots \tilde{u}_d \dot{s}_{i_1} \ldots \widehat{s}_{i_{p_1}} \ldots \widehat{s}_{i_{p_d}} \ldots \dot{s}_{i_l} B_+/B_+ = \tilde{u}_1 \ldots \tilde{u}_d v B_+/B_+ = \kappa(\tilde{u}_1, \ldots , \tilde{u}_d), $$
 where for every $1 \leq j \leq d$ we have
 $$ \tilde{u}_j = v_{\leq p_j-j} u_j v_{\leq p_j-j}^{-1} $$
 with 
 $$ v_{\leq p_j-j} =s_{i_1} \ldots \widehat{s}_{i_{p_1}} \ldots \widehat{s}_{i_{p_{j-1}}} \ldots s_{i_{p_j-1}} =  s_{\beta_{p_1}} \ldots s_{\beta_{p_{j-1}}} w_{\leq p_j-1} .$$
 Therefore,
 $$ \tilde{u}_j 
 \begin{cases}
     = 1, & \text{if $j \in K$} \\
     = \dot{s}_{s_{\beta_{p_1}} \ldots s_{\beta_{p_{j-1}}}} w_{\leq p_j-1}(\alpha_{i_{p_j}}) = \dot{s}_{s_{\beta_{p_1}} \ldots s_{\beta_{p_{j-1}}}(\beta_{p_j})} = \dot{s}_{\beta_{p_j}}, &\text{if $j \in J$} \\
     \in U_{- s_{\beta_{p_1}} \ldots s_{\beta_{p_{j-1}}} w_{\leq p_j-j}(\alpha_{i_{p_j}})}^{\times} = U_{- s_{\beta_{p_1}} \ldots s_{\beta_{p_{j-1}}}(\beta_{p_j})}^{\times} = U_{- \beta_{p_j}}^{\times}, &\text{otherwise.}
     \end{cases}
     $$
 Comparing with~\eqref{def AJK}, this gives that $(\tilde{u}_1, \ldots, \tilde{u}_d) \in A_{J,K}$, so $x \in \kappa(A_{J,K})$. Thus we have shown that $\mathcal{R}_{v_J}^{w_K} \subseteq \kappa(A_{J,K})$, and combining with Proposition~\ref{prop: inclusion general case}, we obtain the desired equality. 
      \end{proof}

\bth{t:kappa-isom} For all connected simply connected complex semisimple Lie groups $G$ and
$(v, w) \in \GCR(W)$, the map $\kappa$ defines an isomorphism of algebraic varieties 
\begin{equation}
    \label{eq:kappa-isom}
    L_{\beta_{p_1}}/B_{\beta_{p_1}} \times \cdots \times L_{\beta_{p_d}}/B_{\beta_{p_d}} \simeq \Rvw.
\end{equation}
    Under this isomorphism, the orbits for the natural $(\Cset^\times)^d$-action on $(\cp)^d$ match with the open Richardson varieties $\mathcal{R}_{v'}^{w'}$ for $v \leq v' \leq w' \leq w$.
 \eth
 \bre{Ohn} A special case of the theorem was proved by Ohn in \cite{Ohn}. The statement in \cite{Ohn} is  phrased in terms of orthocells in $W$, and in our language, \cite{Ohn} proves that 
 \[
 \overline{\mathcal{R}_{1,w}} \cong (\cp)^{l(w)} \quad \mbox{for} \quad (1,v) \in \GCR(W).
 \]
 An additional left translate of $\overline{\mathcal{R}_{1,w}}$ by a Weyl group  element is considered in \cite{Ohn} but that does not affect the statement since the action of $G$ on $G/B_+$ is by automorphisms. The statement in \cite {Ohn} is simpler to prove than ours since the case of $(v,w) \in \GCR(W)$ when $v =1$ is simply a consideration of families of pairwise orthogonal roots by condition (5) in Theorem \ref{t:v-w-equiv}. \ere

\begin{proof}[Proof of Theorem \ref{t:kappa-isom}]
It follows from Proposition \ref{prop: inclusion general case}(ii) and Proposition \ref{prop : equality} that $\kappa$ is a bijection between the two sides of \eqref{eq:kappa-isom}. We will show that the differential 
\[
d \, \kappa_x : T_p \, \big( L_{\beta_{p_1}} / B_{\beta_{p_1}} \times \cdots \times L_{\beta_{p_d}} / B_{\beta_{p_d}} \big) \to T_{\kappa(x)} \, \big( G/B_+ \big)
\]
is injective everywhere, which would imply that $\kappa$ defines an isomorphism between the two sides of \eqref{eq:kappa-isom}, see e.g. \cite[Corollary 14.10]{Harris}. 

For each subset $K \subseteq [1,d]$, consider the 
product of shifted Schubert cells
$$ V_K := \{ (t_1 u_1 B_{\beta_{p_1}} , \ldots , t_d u_d B_{\beta_{p_d}}) \mid u_k \in U_{- \beta_{p_k}} \} \subset L_{\beta_{p_1}} / B_{\beta_{p_1}} \times \cdots \times L_{\beta_{p_d}} / B_{\beta_{p_d}},  $$
where 
$$ t_k := \begin{cases} 
 \dot{s}_{\beta_{p_k}} & \text{ if $k \in K$} \\
 1 & \text{if $k \notin K$.}
 \end{cases}
 $$
 The subsets $V_K$ form an open affine covering of $L_{\beta_{p_1}} / B_{\beta_{p_1}} \times \cdots \times L_{\beta_{p_d}} / B_{\beta_{p_d}}$. The restriction of $\kappa$ to $V_K$ is given as follows. For $(u_1, \ldots , u_d) \in U_{- \beta_{p_1}} \times \cdots \times U_{- \beta_{p_d}}$,
 \begin{multline*}
     \kappa(t_1 u_1 B_{\beta_{p_1}} , \ldots , t_d u_d B_{\beta_{p_d}}) = t_1 u_1 \ldots t_d u_d v B_{+}/B_+ =\\ 
     = \big(  \textstyle \prod_{k \in K} \dot{s}_{\beta_{p_k}} \big)  \dot{v}  u'_1 \ldots u'_d B_{+}/B_+ \in \big(  \textstyle \prod_{k \in K} s_{\beta_{p_k}} \big)  v  U_- B_+/B_+,
     \end{multline*}
where 
\[
u'_j = \dot{v}^{-1} \big(  \textstyle \prod_{k \in K, k>j} \dot{s}_{\beta_{p_k}} \big)^{-1} u_j \big(  \textstyle \prod_{k \in K, k>j} \dot{s}_{\beta_{p_k}} \big) \dot{v} \in U_{-v^{-1}(\beta_{p_j})}.
\]
Here we use that $\dot{s}_{\beta_{p_k}}$ normalizes $U_{\beta_{p_j}}$ for
$ k \neq j$ by the orthogonality of $\beta_{p_k}$ and $\beta_{p_j}$. We can identify 
\begin{align*}
&V_K \simeq U_{- \beta_{p_1}} \times \cdots \times U_{- \beta_{p_d}}
\\
&\big(  \textstyle \prod_{k \in K} \dot{s}_{\beta_{p_k}} \big)  \dot{v}  U_- B_+/B_+ \simeq U_-,    
\end{align*}
and in this identification, the restriction of $\kappa$ to $V_K$ is given by 
\[
(u_1, \ldots , u_d) \mapsto u'_1 \ldots u'_d.
\]
This map has everywhere injective differential 
because this is true for the product map 
\[
U_{\gamma_1} \times \cdots \times U_{\gamma_d} \to U_+
\]
for each collection of distinct positive roots 
$\gamma_1, \ldots, \gamma_d$, see e.g. 
\cite[Proposition 8.2.1]{Springer}.
The last statement of the theorem follows from 
Proposition \ref{prop : equality}.
\end{proof}
\subsection{The Bruhat interval $[v,w] \in W$} Assume the setting of Section \ref{sec:nota} for $v, w \in W$ and recall the notation $w_K$ from \eqref{eq:w_K} for $K \subseteq [1,d]$. From eq. \eqref{eq:vJwK}, we have 
\[
v \leq w_K \leq w, \quad \forall K \subseteq [1,d],
\]
for instance, $w = w_\varnothing$ and $v = w_{[1,d]}$. This leads to the question of whether all elements in the Bruhat interval $[v,w]$ in $W$ for $(v,w) \in \GCR(W)$ are of this form and what the Bruhat order relations between them are. This can be done using the algebro-geometric results from the previous subsection: 
\bth{t:Bruhat-interval} For all connected simply connected complex semisimple Lie groups $G$ and $(v,w) \in \GCR(W)$, the Bruhat interval $[v,w]$ in $W$ is isomorphic to 
the power set of $[1,d]$ with the reverse order, i.e., 
the poset of subsets of $[1,d]$ with order given by reverse inclusion. 

In other words, all elements $w' \in W$ with $v \leq w' \leq w$ are of the form
\[
w' = w_K \quad \mbox{for some} \quad K \subseteq [1,d]
\]
and
\[
w_K \leq w_J \quad \Leftrightarrow \quad K \supseteq J.
\]
\eth

We note that for a set $S$, the power set of $S$ is isomorphic to the power set of $S$ with the reverse order by considering complements of subsets.   
 
\begin{proof} Combining Proposition \ref{prop: inclusion general case} and Theorem \ref{t:kappa-isom} gives that
\[
\Rvw = \bigsqcup_{J, K \supseteq [1,d], J \cap K = \varnothing} \mathcal{R}_{v_J}^{w_K}.
\]
Comparing this with the description \eqref{eq:clos-R} of the closure of open Richardson varieties gives that the pairs 
\[
(v', W') \in W \times W \quad \mbox{such that} \quad v \leq v' \leq w' \leq w
\]
are precisely the pairs of the form
\[
(v_J, w_K) \quad \mbox{such that} \quad J, K \supseteq [1,d], J \cap K = \varnothing.
\]
Taking into account that $v_J = w_{[1,d] \backslash J}$ leads to the statement of the theorem. 
\end{proof}
\bre{r:sub-word}
Consider a reduced word $(i_1, \ldots i_l)$ and reduced subword of it
\[
(i_1, \ldots , \widehat{i}_{p_1} , \ldots , \widehat{i}_{p_d} , \ldots , i_l).
\]
Theorem \ref{t:Bruhat-interval} implies that if their values $w$ and $v$ satisfy 
any of the equivalent conditions of Theorem \ref{t:v-w-equiv}, 
then all intermediate words (i.e, subwords of the former that include the latter)
are reduced and their values are precisely the Bruhat interval $[v,w]$. 

If we drop the assumptions of Theorem \ref{t:v-w-equiv}, all of these properties fail
even for positive subwords.
For example, consider the reduced word $(1,2,1)$ in $S_3$ and the positive subword 
$(\widehat{1},\widehat{2},1)$. The intermediate subword $(1, \widehat{2},1)$ is not reduced.
\ere

\subsection{ The irreducible components of the reduced Poisson degeneracy locus}
Denote
\[
W_{\leq} := \{ (v, w) \in W \times W \mid v \leq w \} 
\]
and the induced partial order on it
\begin{equation}
\label{eq:ord-GCR2}
(v, w) \leq (v', w') \quad \Leftrightarrow \quad v \leq v' \leq w' \leq w.
\end{equation}
Lemma \ref{l:leq-GCR} and Theorem \ref{t:Bruhat-interval} imply that this partial order has the following properties:
\bco{c:max-CGR}
    The following hold for all $(v,w) \in \GCR(W)$:
    \begin{enumerate}
    \item[(i)] If  $(v', w') \in W_{\leq}$ and $(v',w') \leq (v,w)$, then $(v', w') \in \GCR(W)$.
    \item[(ii)] In the notation of Section \ref{sec:nota}, the elements $(v', w') \in \GCR(W)$ such that $(v', w') \leq (v,w)$ are precisely the elements 
    \[
    (v_J, w_K) \quad \mbox{for} \quad J, K \subseteq [1,d], J \cap K = \varnothing.
    \]
    \end{enumerate}
\eco
Denote by
\[
\max \GCR(W)
\]
the set of maximal elements of $\GCR(W)$ with respect to the partial order \eqref{eq:ord-GCR2}. Recall from Corollary \ref{c:T-orbit-stratification} that the reduced Poisson degeneracy 
locus $D_0(G/B_+,\pi)_{\redd}$ of $G/B_+$ is connected. 
Theorem \ref{t:v-w-equiv} and Eq. \eqref{eq:clos-R} give the following result:
\bpr{p:irr-comp}
 For every connected simply connected complex semisimple Lie group $G$, 
 the irreducible components of the reduced Poisson degeneracy locus $D_0(G/B_+,\pi)_{\redd}$ are
 \[
 \Rvw \simeq (\cp)^{l(w) - l(v)}
 \]
for $(v,w) \in \max \GCR(W)$.
\epr
Next we present two examples of the sets of irreducible components of the reduced Poisson degeneracy loci 
\[
D_0(SL_{n+1}(\Cset)/B_+, \pi)_{\redd}
\]
for $n=2$ and $3$. The second example shows that the reduced Poisson degeneracy locus $D_0(G/B_+,\pi)_{\redd}$ is not equidimensional in general. 

The automorphism group of the Dynkin graph of $\sl_{n+1}$ for $n > 1$ is $\Aut(\Gamma) \cong \Zset_2$ and Remark \ref{symmetries} gives an action of 
\begin{equation}
\label{eq:Z2Z2}
(\Zset_2 \times \Zset_2) \ltimes T \quad 
\mbox{on} \quad D_0(SL_{n+1}(\Cset), B_+, \pi). 
\end{equation}

Figure 1 shows the Bruhat graphs of $S_3$ and $S_4$;
we use the one-line notation for permutations. 
We identify the vertex $w \in S_{n+1}$ of the Bruhat graph with the Richardson variety
\[
\overline{\mathcal{R}_{w,w}} = w B_+/B_+
\]
and the edge between $v \lessdot w$ with the Richardson variety 
\[
\Rvw \cong \cp.
\]
The action of $\Zset_2 \times \Zset_2$ on this collection of Richardson varieties inside $D_0(SL_{n+1}(\Cset), B_+, \pi)$ is given by the reflections with respect to the central horizontal and vertical lines.

\bex{ex:sl3-1}
Consider the case $G = SL_3(\Cset)$. There are no pairs of orthogonal roots and  
\[
\GCR(W) = \{ (v, w) \in W \times W \mid v=w \; \; \mbox{or} \; \; v \lessdot w \}. 
\]
 \begin{figure}
\label{fig1}
    \centering
    \includegraphics[scale=0.5]{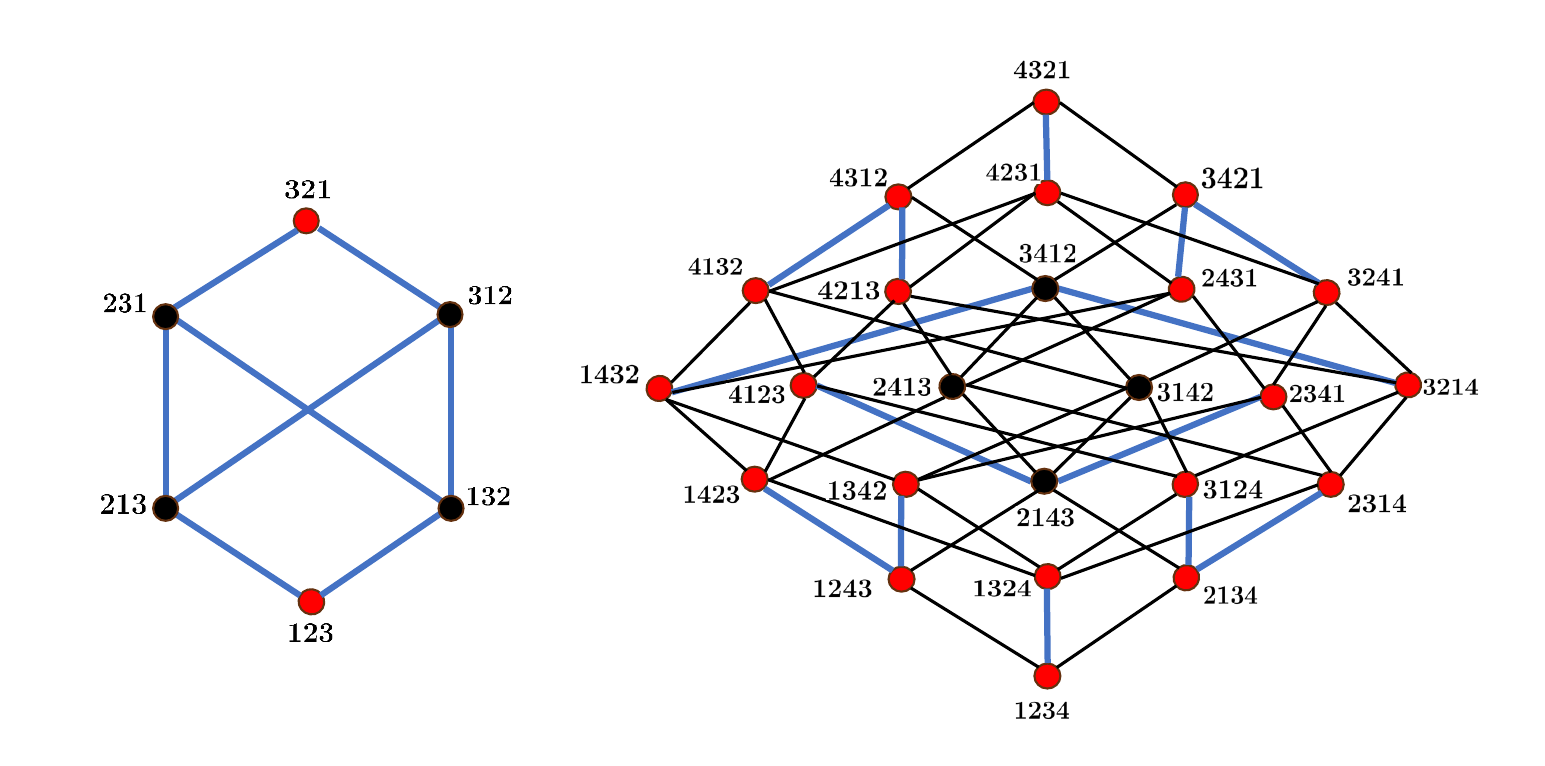}
    \caption{Each edge represents an irreducible Richardson variety which is isomorphic to a $\cp$}
    \label{'Buhat Graph'}
\end{figure}
The reduced Poisson degeneracy locus
$D_0(SL_3(\Cset)/B_+,\pi)_{\redd}$ is given by
the Bruhat graph of $S_2$ on the first picture in Figure 1 with a copy of $\cp$ for each edge and the corresponding projective lines intersecting at the vertices as on the diagram.
\eex
\bex{ex:sl4-1}
Consider the case $G = SL_4(\Cset)$. Now
\[
\GCR(W) \supsetneq \{ (v, w) \in W \times W \mid v=w \; \; \mbox{or} \; \; v \lessdot w \}. 
\]
The reduced Poisson degeneracy locus $D_0(SL_4(\Cset)/B_+,\pi)_{\redd}$ contains a copy of $\cp$ for each edge the Bruhat graph of $S_3$ on the second picture in Figure 1, with the copies of $\cp$ 
intersecting in the same way as the incidence relations for the edges of the Buhat graph. There are a total of 14 pairs $(v,w) \in \GCR(W)$ with $l(w)-l(v) = 1$ that are irreducible components. They are precisely 
the edges of the second picture in Figure 1 colored in blue; the edges in black correspond to closed Richardson varieties embedded in 2-dimensional irreducible components of $D_0(SL_4(\Cset)/B_+,\pi)_{\redd}$.  
%
%
%
%
%
%

Furthermore, $SL_4(\Cset)$ has 3 pairs of positive orthogonal roots:
\begin{equation}
\label{eq:orthog}
(\alpha_{1},\alpha_{3}), \quad (\alpha_{2},\alpha_{1}+\alpha_{2}+\alpha_{3}) \quad \mbox{and} \quad 
(\alpha_{1}+\alpha_{2},\alpha_{2}+\alpha_{3}).
\end{equation}
There are a total of 11 pairs $(v, w) \in \GCR(W)$ with $l(w)- l(v)=2$, which are listed as follows based on the length of the element $v$
\bigskip

\textbf{$\alpha_{1},\alpha_{3}$:}
\begin{tabular}{|c|c|}
    \hline
    $l(v) = 0$:& $(1234,2143)$ \\
    $l(v) = 1$:& $(1324,2413)$\\
    $l(v) = 2$:& $(1342,2431), (3124,4213)$\\
    $l(v) = 3$:& $(3142, 4231)$\\
    $l(v) = 4$:& $(3412,4321)$\\
    \hline
\end{tabular}
\bigskip

\textbf{$\alpha_{1}+\alpha_{2}, \alpha_{2}+\alpha_{3}$:}
\begin{tabular}{|c|c|}
    \hline
   $l(v) = 1$: &$(1324,3142)$\\
   $l(v) = 3$:& $(2413,4231)$\\
    \hline
\end{tabular}
\bigskip

\textbf{$\alpha_{2},\alpha_{1}+\alpha_{2}+\alpha_{3}$:}
\begin{tabular}{|c|c|}
    \hline
     $l(v) = 2$: & $(1423,4132), (2143,3412), (2314,3241)$ \\
    \hline
\end{tabular}
\bigskip

They give rise to 11 irreducible components of $D_0(SL_4(\Cset)/B_+,\pi)_{\redd}$ that are isomorphic to $(\cp)^2$. Figure 2 illustrates 
them: there are 6 components corresponding to the square faces of the polytope colored in blue and 5 components corresponding to the colored planes. 

The intersection of each two irreducible components of $D_0(SL_4(\Cset)/B_+, \pi)_{\redd}$ is either empty or a point. To see this, assume that two irreducible components have a $\cp$ in common; each of them will have to be isomorphic to a $(\cp)^2$. Their intersection will have to be $\overline{\mathcal{R}_{v'}^{w'}}$ for some $v' \lessdot w'$, i.e., $v' = s_\beta w'$, and the two components would have to be of the form 
$\overline{\mathcal{R}_{v'}^w}$ for some $w \in W$ such that $w'\lessdot w$ and $(v', w) \in \GCR(W)$ or $\overline{\mathcal{R}_v^{w'}}$ for some $v \in W$ such that $v \lessdot v'$ and $(v,w') \in \GCR(W)$. Theorem \ref{t:v-w-equiv} would imply that the root $\beta$ belongs to two different pairs of positive orthogonal roots of $SL_4(\Cset)$ which contradicts \eqref{eq:orthog}. 
\begin{figure}
\label{fig2}
    \centering
    \includegraphics[scale = 0.20]{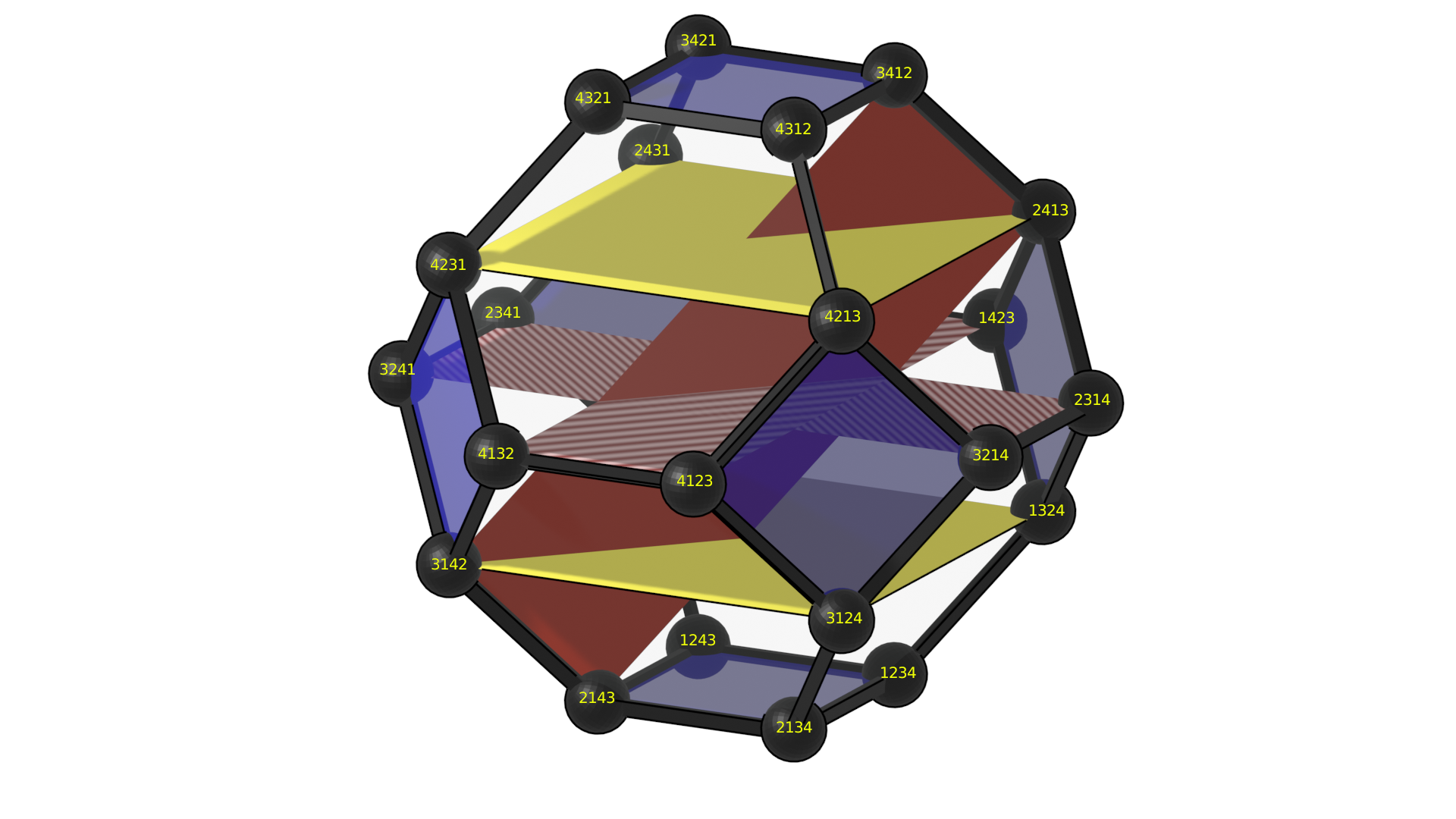}
    \caption{The 11 irreducible components $(\cp)^{2}$ of the reduced Poisson degeneracy locus in the case of $SL_{4}(\Cset)$ in the background of the Bruhat graph. The blue faces correspond to the irreducible components for the roots $\alpha_{1},\alpha_{3}$, the red ones correspond to $\alpha_{2},\alpha_{1}+\alpha_{2}+\alpha_{3}$, and yellow ones to $\alpha_{1}+\alpha_{2},\alpha_{2}+\alpha_{3}$.}
    \label{'fig:binamos'}
\end{figure}
\eex
\subsection{Non-reducedness of Poisson degeneracy loci}
\label{non-reducedness}
Next we show that the Poisson degeneracy loci 
$D_0(SL_{n+1}(\Cset)/B_+,\pi)$ are not reduced even in the simplest cases of $n=2$ and $3$. We identify
\begin{equation}
    \label{eq:ident1}
U_- \simeq U_- B_+/B_+ \quad \mbox{via} \quad
u_- \mt u_- B_+/B_+
\end{equation}
and 
\begin{equation}
    \label{eq:ident2}
\Aset^{n(n+1)/2} \simeq U_-,  
\end{equation}
where the coordinate functions on $\Aset^{n(n+1)/2}$ are denoted by $x_{ij}$ for $1 \leq j < i \leq n+1$, and the second identification is given by 
\[
(x_{ij}, 1 \leq j < i \leq n) \mt [y_{ij}]_{i,j=1}^{n+1} 
\]
with $y_{ii}:=1$, $y_{ij}:=0$ for $i<j$, $y_{ij}:= x_{ij}$ for $i>j$. Denote the vector fields
\[
\partial_{ij} := \frac{\partial}{\partial x_{ij}} \quad 
\mbox{on} \quad \Aset^{n(n+1)/2}. 
\]

\bex{ex:sl3-2}
Consider the case of $SL_3(\Cset)/B_+$. The restriction of the Poisson structure $\pi$ to $U_- B_+/B_+$, under the identifications \eqref{eq:ident1}--\eqref{eq:ident2}, is given by
\[
\pi= (-x_{21}x_{31}) \partial_{31}\wedge \partial_{21} + (x_{21}x_{32}-2x_{31}) \partial_{32}\wedge \partial_{21} + 
(-x_{31}x_{32}) \partial_{32}\wedge \partial_{31}.
\]
The ideal defining its Poisson degeneracy locus has primary decomposition
\begin{align}
\label{eq:prim-decomp1}
&(x_{21}x_{31},x_{21}x_{32}-2x_{31}, x_{31}x_{32}) =
\\
&(x_{32},x_{31}) \cap (x_{31},x_{21}) \cap 
(x_{32}^{2},x_{31}x_{32}, x_{21}x_{32}-2x_{31}, x_{21}x_{31},x_{21}^{2}). 
\nn
\end{align}
The first two ideals in the primary decomposition are prime and are precisely the vanishing ideals of the two irreducible components 
\[
D_0(SL_3(\Cset)/B_+, \pi)_{\redd} \cap U_-B_+/B_+ = 
\big( {\mathcal{R}}_{1,s_1} \cap U_-B_+/B_+ \big) \cap 
\big( {\mathcal{R}}_{1,s_2} \cap U_-B_+/B_+ \big),
\]
which match with the irreducible components of 
$D_0(SL_3(\Cset)/B_+, \pi)_{\redd}$ 
from Example \ref{ex:sl3-1}.
The third ideal in the primary decomposition is not prime; 
it defines an embedded component of the scheme and has associated prime equal to the maximal ideal $(x_{32},x_{31},x_{21})$ corresponding to the base point $B_+/ B_+ \in SL_3(\Cset)/B_+$.

A direct way of seeing that the ideal \eqref{eq:prim-decomp1} is not radical is by noticing that $x_{31}^2$ belongs to it but $x_{31}$ does not.

The flag variety $SL_3(\Cset)/B_+$ has an affine cover by Schubert cell translates
\[
\dot{v} U_- B_+/B_+ \cong U_- \cong \Aset^3
\]
for $v \in S_3$. 
Similarly to the above arguments, one can compute the restriction of the Poisson structure $\pi$ to all such open cells. This gives that the only nonreduced components of $D_0(SL_3(\Cset)/B_+, \pi)$ are 2 schemes supported at 
\[
v B_+/B_+ \quad \mbox{for} \quad v =123, 321.
\]
The corresponding vertices are colored in red on the first picture in Figure 1. 
They form a single orbit under the $\Zset_2 \times \Zset_2$-action \eqref{eq:Z2Z2} on $D_0(SL_3(\Cset)/B_+, \pi)$.
\eex
\bex{ex:sl4-2}
Consider the case of $SL_4(\Cset)/B_+$. The restriction of the Poisson structure $\pi$ to $U_- B_+/B_+$ is given by
\begin{align*}
    &-x_{31}x_{21}\partial_{31}\wedge \partial_{21} + (x_{21}x_{32} - 2x_{31})\partial_{32}\wedge \partial_{21} -x_{41}x_{21} \partial_{41} \wedge \partial_{21}
     \\ & + (x_{21}x_{42} - 2 x_{41})\partial_{42}\wedge \partial_{21} - x_{32} x_{31}\partial_{32}\wedge \partial_{31} -x_{41}x_{31} \partial_{41}\wedge \partial_{31}
      \\ & -2 x_{32}x_{41}\partial_{42}\wedge \partial_{31} + (x_{31}x_{43} - 2x_{41}) \partial_{43}\wedge \partial_{31} -x_{32}x_{42} \partial_{42}\wedge\partial_{32} \\ &+ (x_{32}x_{43} - 2x_{42}) \partial_{43}\wedge \partial_{32} 
            -x_{42}x_{41} \partial_{42}\wedge \partial_{41} - x_{43}x_{41} \partial_{43}\wedge \partial_{41} -x_{43}x_{42} \partial_{43} \wedge \partial_{42}.
\end{align*}
The ideal defining its Poisson degeneracy locus has the primary decomposition
\begin{align*}
    &( x_{42},x_{41},x_{32},x_{31}) \cap  (x_{43},x_{42},x_{41},x_{31},x_{21}) \cap 
\\&(x_{43}^2, x_{42} x_{43}, x_{41} x_{43}, x_{32} x_{43}-2 x_{42}, x_{31} x_{43}-2 x_{41},
x_{41} x_{42}, x_{32} x_{42}, x_{21} x_{42}-2 x_{41}, 
\\& \; \; x_{32} x_{41},x_{31} x_{41}, x_{21} x_{41}, x_{32}^2, x_{31} x_{32}, x_{21} x_{32}-2 x_{31}, x_{21} x_{31}, x_{21}^2).
\end{align*}    
The first two ideals in it are prime ideals, which are the vanishing ideals of the two irreducible components 
\[
D_0(SL_4(\Cset)/B_+, \pi)_{\redd} \cap U_-B_+/B_+ = 
\big( {\mathcal{R}}_{1,s_1s_3} \cap U_-B_+/B_+ \big) \cap 
\big( {\mathcal{R}}_{1,s_2} \cap U_-B_+/B_+ \big).
\]
They match with the irreducible components of 
$D_0(SL_4(\Cset)/B_+, \pi)_{\redd}$ 
from Example \ref{ex:sl4-1}.
The third ideal in the primary decomposition is not prime; 
it defines an embedded component of the scheme and has associated prime equal to the maximal ideal 
$( x_{43},x_{42}, x_{41},x_{32},x_{31},x_{21})$ 
corresponding to the base point $B_+/B_+ \in SL_4(\Cset)/B_+$.

The flag variety $SL_4(\Cset)/B_+$ has an affine cover by Schubert cell translates
\[
\dot{v} U_- B_+/B_+ \cong U_- \cong \Aset^6
\]
for $v \in S_4$. 
Similarly to the above arguments, one computes the restriction of the Poisson structure $\pi$ to all such open cells. This gives that the only nonreduced components of $D_0(SL_4(\Cset)/B_+, \pi)$ are 20 schemes supported at the points
\[
v B_+/B_+ \in SL_4(\Cset)/B_+
\]
for those $v \in S_4$ that are colored in red on the first picture in Figure 1. 
They form 7 orbits under the $\Zset_2 \times \Zset_2$-action \eqref{eq:Z2Z2} on $D_0(SL_4(\Cset)/B_+, \pi)$.
\eex
\sectionnew{Partial flag varieties}
Throughout this section we fix a parabolic subgroup $P \supseteq B_+$ of a connected simply connected complex semisimple Lie group $G$ with Levi subgroup $L \supseteq T$ and use the notations of Section~\ref{section : recollection on partial flags}. We describe the $T$-orbit stratification of the Poisson degeneracy locus of the partial flag variety $G/P$ with respect to the standard Poisson structure. We show that the strata are projected open Richardson varieties of the same combinatorial type as those in Section \ref{sec:Poisson-flag}. Their closures are shown to be isomorphic to $(\cp)^d$ for $d \in \Nset$.  The corresponding $P$-Burhat order intervals $[v,w]_P$ are shown to be isomorphic to power sets. 
\subsection{The standard Poisson structure }
The standard Poisson structure on the partial flag variety $G/P$ is given by the Poisson bivector field 
\begin{equation}
    \pi_P = \sum_{\beta \in \Delta_{+}} \chi(e_\beta) \wedge \chi(f_\beta)
\end{equation}
where $\chi: \mathfrak{g} \to \mathrm{Vect}(G/P)$ is the infinitesimal action of $G$ on $G/P$ and the root vectors $e_\beta, f_\beta$ are as in 
Section \ref{sec:Poisson-flag} (see e.g. \cite{BGY,GY}). The Poisson structure $\pi_P$ equals the push-forward of the standard Poisson structure \cite[Sect. 4.4]{ES} on $G$ under the projection map $G\to G/P$ and  
\[
\pi_P := \eta_* (\pi), 
\]
recall \eqref{eq:eta}. 
The action of the maximal torus $T$ on ($G/P,\pi_P$) is Poisson.
\bpr{p:GmodP}
For all connected simply connected complex semisimple Lie groups $G$ and parabolic subgroups $P \supseteq B_+$, the following hold:
\begin{enumerate}
\item[(i)] For all $w \in W^P$, the restriction
\[
\eta|_{B_+ w B_+/B_+}  : (B_+ w B_+/B_+, \pi) \to 
(B_+ w P/P, \pi_P)
\]
is an isomorphism of Poisson varieties. 
\item[(ii)] The $T$-orbits of symplectic leaves of $(G/P, \pi_P)$ are the projected open Richardson varieties $\Pi_v^w$ for $v \leq w \in W^P$.  
\item[(iii)] The codimension of a symplectic leaf in $\Pi_v^w$ for $v \leq w \in W^P$, i.e., the corank of $\pi$ in $\Pi_v^w$, equals
\begin{equation*}
    \dim \Ker(w^{-1}v +1).
\end{equation*}
\end{enumerate}
\epr
\begin{proof} (i) In \cite[Proposition 1.6]{GY} it is proved that the restriction
\[
\eta|_{B_- v B_+/B_+}  : (B_- v B_+/B_+, \pi) \to 
(B_- v P/P, \pi_P)
\]
is a Poisson isomorphism for all maximal length representatives $v \in W$ for the cosets in $W/W_P$. The statement in part (i) of the proposition is proved analogously.

(ii) This part was proved in \cite[Main Theorem (i)]{GY}. It also follows from part (i) of the proposition,  
Theorem \ref{t:sympl-leaves}(i) and the fact that 
the map $\eta : G/B_+ \to G/P$ is $T$-equivariant. 

(iii) This part follows at once from part (i) of the proposition and Theorem \ref{t:sympl-leaves}(ii).  
\end{proof} 
Denote
\[
\GCR_P(W) := \GCR(W) \cap (W \times W^P). 
\]
Theorem \ref{t:v-w-equiv}, Corollary \ref{c:T-orbit-stratification} and Proposition \ref{p:GmodP}(iii) imply the following:
\bco{c:GmodP} For all connected simply connected complex semisimple Lie groups $G$ and parabolic subgroups $P \supseteq B_+$, the reduced 
Poisson degeneracy locus of $(G/P, \pi_P)$ is given by
\[
D_0(G/P, \pi_P)_{\redd} = 
\bigsqcup_{(v, w) \in \GCR_P(W)} \Pi_v^w. 
\]
The strata in this stratification are precisely the 
$T$-orbits of $D_0(G/P, \pi_P)_{\redd}$. 
\eco
\subsection{The $P$-Bruhat interval $[v,w]$ for $(v,w) \in \GCR_P(W)$}
Next, for $(v,w) \in \GCR_P(W)$, we consider the $P$-Bruhat interval $[v,w]_P$ consisting of $v' \in W$ such that $v \leq_P v' \leq_P w$ 
with the partial order $\leq_P$. We show that it coincides with the Bruhat interval $[v,w]$ 
described in Theorem \ref{t:Bruhat-interval}.  

  \ble{lem: condition for P-Bruhat ordering}
    Assume  that $(v, w) \in \GCR(W)$ and $v \lessdot w$, i.e., $v< w$ and $l(w) = l(v)+1$. Denote $v = s_{\beta}w$ as in Lemma \ref{l:Bruhat}.   
    If $w^{-1}(\beta) \notin \Delta^L$, then $v \lessdot_P w$, recall \eqref{eq:Plessdot}
  \ele

  \begin{proof}
 Decompose $v$ and $w$ in a unique way as $v=v^Pv_P$ and $w = w^Pw_P$ with $v_P, w_P \in W_P$ and $v^P, w^P \in W^P$, cf. Section \ref{section : recollection on partial flags}. We have 
  $$ s_{v^{-1}(\beta)} = v^{-1}s_{\beta}v = v^{-1} w = v_P^{-1}(v^P)^{-1}w^Pw_P . $$
  Assume that the statement of the lemma does not hold. Then $v^P=w^P$, and hence 
  \[
  s_{w^{-1}(\beta)} = w^{-1} s_\beta w = w^{-1} v = w_P^{-1} v_P^{-1} \in W_P. 
  \] 
  This implies that $w^{-1}(\beta) \in \Delta^L$, which is a contradiction.
  \end{proof}
 
For the remaining of this section we fix
\[
(v,w) \in \GCR_P(W).
\]
We fix a reduced word for $w$ as in \eqref{eq:w-rw} and a subword of it with value $v$ as in \eqref{eq:v-sub-w}. We will use the notation of Section \ref{sec:nota}.

 \begin{lemma}  \label{lemma : root subgroups avoiding P} 
 For all $(v,w) \in \GCR_P(W)$ and $1 \leq k \leq d = l(w)-l(v)$, we have 
 \[
 w^{-1}(\beta_{p_k}) \notin \Delta^L
 \]
 in the notation \eqref{eq:beta-k}. 
 \end{lemma}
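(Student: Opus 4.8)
The plan is to deduce the statement directly, using only that each $\beta_{p_k}$ lies in the inversion set $\Delta_+^w$ together with the way the condition $w\in W^P$ governs the action of $w$ on the roots of $L$; the orthogonality of the $\beta_{p_k}$ and the full strength of the $\GCR$ condition will not be needed.

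First I would unpack the hypothesis. By definition $(v,w)\in\GCR_P(W)$ means $w\in W^P$ and $(v,w)\in\GCR(W)$, so in the notation of Section~\ref{sec:nota} (condition~(6) of Theorem~\ref{t:v-w-equiv}) the deleted positions $p_1<\dots<p_d$ give roots $\beta_{p_k}=w_{\leq p_k-1}(\alpha_{i_{p_k}})\in\Delta_+^w$, recall \eqref{eq:beta-k}. In particular $w^{-1}(\beta_{p_k})\in-\Delta_+$ for every $1\leq k\leq d$; equivalently, $w^{-1}(\beta_{p_k})=-v^{-1}(\beta_{p_k})$ with $v^{-1}(\beta_{p_k})\in\Delta_+$ by Proposition~\ref{lemma : positive roots and orderings}(i).

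Next I would record the standard fact that $w\in W^P$ forces $w(\Delta_+^L)\subseteq\Delta_+$. Indeed, $w\in W^P$ means $l(ws_\alpha)>l(w)$, i.e. $w(\alpha)\in\Delta_+$, for every simple root $\alpha$ of $L$; writing an arbitrary $\gamma\in\Delta_+^L$ as a nonnegative integer combination of such simple roots and applying $w$ exhibits $w(\gamma)$ as a nonzero nonnegative integer combination of positive roots, hence $w(\gamma)\in\Delta_+$. Equivalently $w(\Delta_-^L)\subseteq\Delta_-$.

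The conclusion then follows by contradiction: if $w^{-1}(\beta_{p_k})\in\Delta^L$ for some $k$, then, being a negative root, it lies in $\Delta^L\cap(-\Delta_+)=\Delta_-^L$, whence $\beta_{p_k}=w\bigl(w^{-1}(\beta_{p_k})\bigr)\in w(\Delta_-^L)\subseteq\Delta_-$, contradicting $\beta_{p_k}\in\Delta_+^w\subseteq\Delta_+$. The only real care required — there being no genuine obstacle here — is to keep the roles of $w$ and $w^{-1}$ distinct (the hypothesis $w\in W^P$ constrains $w$ on $\Delta^L$, whereas the conclusion concerns $w^{-1}(\beta_{p_k})$) and to keep track of signs. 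Downstream, this lemma is exactly the input needed to invoke Lemma~\ref{lem: condition for P-Bruhat ordering} and thereby promote Bruhat covering relations among the $v_J,w_K$ to $P$-Bruhat covering relations.
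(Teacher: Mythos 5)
Your proof is correct and follows essentially the same route as the paper: establish $w(\Delta_+^L)\subseteq\Delta_+$ from $w\in W^P$, note that each $\beta_{p_k}\in\Delta_+^w$ forces $w^{-1}(\beta_{p_k})\in-\Delta_+$, and derive a sign contradiction if $w^{-1}(\beta_{p_k})$ were in $\Delta^L$. The detour through $v^{-1}(\beta_{p_k})$ and Proposition~\ref{lemma : positive roots and orderings}(i) is harmless but unnecessary, as you yourself note.
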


  \begin{proof}
   As $w \in W^P$, we have $l(ws_i) > l(w)$ for all $\alpha_i \in \Delta^L$. So $w(\alpha_i) \in \Delta_{+}$ for $\alpha_i \in \Delta^L$; that is
   $w(\Delta_{+}^L) \subset \Delta_{+}$.

   By \eqref{eq:beta-k}, $w^{-1}(\beta_j) \in - \Delta_+$, $\forall 1 \leq j \leq l(w)$. If 
   $w^{-1}(\beta_{p_k}) \in \Delta^L$ for some $1 \leq k \leq d$, then $- \beta_{p_k} \in w(\Delta_+^L) \cap (- \Delta_+)$ which is a contradiction. 
  \end{proof}

   \bpr{p:vJwK-P}
   Assume that $(v,w) \in \GCR_P(W)$ and let $d := l(w)-l(v)$.
With the notations of Section~\ref{sec:nota}, we have that  $v_J \leq_P w_K$ for every $J,K \subseteq [1,d]$ such that $J \cap K = \varnothing$. 
\epr

    \begin{proof}
    By~\eqref{eq:vJwK} we have that $(v_J,w_K) \in \GCR(W)$. Denote 
    $$ \{ l_1, \ldots , l_t \} := [1,d] \setminus (J \sqcup K). $$
    It follows from \eqref{eq:w_K}--\eqref{eq:v_K} and the commutativity of $s_{\beta_{p_1}}, \ldots, s_{\beta_{p_d}}$ that
    $$ w_K = s_{\beta_{p_{l_1}}} \ldots s_{\beta_{p_{l_t}}} v_J.  $$
 For $0 \leq r \leq t$ set
    $$ J_r =: J \sqcup \{l_1, \ldots , l_r \} .  $$ 
    Theorem \ref{t:Bruhat-interval} implies that we have the saturated chain in $W$ 
\begin{equation}
 \label{eq : conditions for lemma on P-Bruhat ordering}
v_J = v_{J_0} \lessdot v_{J_1} \lessdot \cdots \lessdot v_{J_t} = w_K . 
\end{equation}
Moreover, 
$$ v_{J_r}^{-1}(\beta_{p_{l_{r+1}}}) = v^{-1} \Big( \textstyle \prod_{j \in J} s_{\beta_{p_j}} \Big) s_{\beta_{p_{l_1}}} \ldots s_{\beta_{p_{l_r}}} (\beta_{p_{l_{r+1}}}) = v^{-1}(\beta_{p_{l_{r+1}}}) $$
 by the orthogonality of $\beta_{p_1}, \ldots , \beta_{p_d}$. Thus by Lemma~\ref{lemma : root subgroups avoiding P}, we have $v_{J_r}^{-1}(\beta_{p_{l_{r+1}}}) \notin \Delta^L$. Together with~\eqref{eq : conditions for lemma on P-Bruhat ordering}, we can apply Lemma~\ref{lem: condition for P-Bruhat ordering} to obtain that $v_{J_r} \lessdot_P v_{J_{r+1}}$ for all $0 \leq r \leq t$.
 Therefore $v_J \leq_P w_K$ by the definition of the $P$-Bruhat order. 
 \end{proof}

Theorem \ref{t:Bruhat-interval} and Proposition \ref{p:vJwK-P} imply the following:

\bth{t:P-Bruhat-interval}
For all connected simply connected complex semisimple Lie groups $G$, parabolic subgroups $P \supseteq B_+$ and 
pairs of Weyl group elements $(v,w) \in \GCR_P(W)$, the $P$-Burhat order interval $[v,w]_P$ is identical with the 
Bruhat order interval $[v,w]$ meaning that 
\[
v \leq v' \leq w' \leq w \quad \Leftrightarrow \quad v \leq_P v' \leq_P w' \leq_P w.
\]
This poset is isomorphic to the power set of $[1,l(w)-l(v)]$ with the reverse order.
\eth
\subsection{Structure of the projected closed Richardson varieties $\ol{\Pi_v^w}$ for $(v,w) \in \GCR_P(W)$}
Fix $(v,w) \in \GCR_P(W)$. We will use the notation in Section \ref{sec:nota} and Eq. \eqref{def AJK}. 
Consider the composition 
$$ \kappa_P := \eta \circ \kappa : L_{\beta_{p_1}}/B_{\beta_{p_1}} \times \cdots \times L_{\beta_{p_d}}/B_{\beta_{p_d}} \longrightarrow G/P, $$
recall \eqref{eq:eta} and \eqref{eq:kappa}. 
The following is an analogue of Propositions~\ref{prop: inclusion general case} and \ref{prop : equality}. 
\bpr{p:kappaP-injective}
      The map $\kappa_P$ is injective and for all $(v,w) \in \GCR_P(W)$,
      $$ \kappa_P(A_{J,K}) = \Pi_{v_J}^{w_K}, \quad \forall J,K \subseteq [1,d], \; \; J \cap K = \varnothing.$$
\epr

\begin{proof} Proposition~\ref{prop : equality} implies
   $$ \kappa_P(A_{J,K}) = \eta \left( \kappa(A_{J,K}) \right) = \eta \left( \mathcal{R}_{v_J}^{w_K} \right) = \Pi_{v_J}^{w_K} . $$
   Now we claim that 
\begin{equation}
\label{eq:notsim}
(v_J, w_K) \not\sim (v_{J'}, w_{K'}) 
\end{equation}
for all $J,K,J', K' \subseteq [1,d]$, such that $J \cap K = \varnothing$, $J' \cap K' = \varnothing$
and $(J,K) \neq (J',K')$. Indeed,  $(v_J, w_K) \sim (v_{J'}, w_{K'})$ means that 
$$ v_{J'}^{-1}v_J = w_{K'}^{-1}w_K \in W_P .  $$
  On the one hand, 
 $$ v_{J'}^{-1}v_J = \prod_{j \in (J \cup J') \setminus (J \cap J')} s_{v^{-1}(\beta_{p_j})},  $$
 and thus by the orthogonality of  $\beta_{p_1}, \ldots , \beta_{p_d}$, 
\[ 
v^{-1}(\beta_{p_j}) \in \Ker (1 + v_{J'}^{-1}v_J)
\]
 for all  $j \in (J \cup J') \setminus (J \cap J')$. 
 Since  $v_{J'}^{-1}v_J \in W_P$, we have
 \[
 v_{J'}^{-1}v_J = s_{\gamma_1} \cdots s_{\gamma_t} 
\]
 with $\gamma_1, \ldots \gamma_t \in \Delta^L$. Using~\eqref{eq : inclusion} we get that
\[ 
 \Ker (1+v_{J'}^{-1}v_J) \subset \Span \{ \gamma_1 , \ldots , \gamma_t \}. 
\]
Therefore,
\[ 
v^{-1}(\beta_{p_j}) \in \Span \{ \gamma_1 , \ldots , \gamma_t \}, \quad 
\forall j \in (J \cup J') \setminus (J \cap J'), 
\]
so that $v^{-1}(\gamma_{p_j}) \in \Delta^L$ which contradicts Lemma~\ref{lemma : root subgroups avoiding P}. Thus,  
\[ 
(J \cup J') \setminus (J \cap J') = \varnothing,
\]
i.e., $J=J'$.
 Similarly one shows that $K=K'$. This proves \eqref{eq:notsim}. In light of \eqref{proj3} and~\eqref{eq : stratif of proj Richardsons}, this implies that 
 $$ \kappa_P(A_{J,K}) \cap \kappa_P(A_{J',K'}) = \varnothing $$
 for all $J,K,J', K' \subseteq [1,d]$, such that $J \cap K = \varnothing$, $J' \cap K' = \varnothing$
and $(J,K) \neq (J',K')$. Hence $\kappa_P$ is injective. 
   \end{proof}

  \bth{t:kappaP-isom}
  For all connected simply connected complex semisimple Lie groups $G$, parabolic subgroups $P \supseteq B_+$ and 
 pairs of Weyl group elements $(v,w) \in \GCR_P(W)$, the map $\kappa_P$ defines an isomorphism of algebraic varieties 
 $$ \overline{\Pi_v^w} \simeq (\cp)^{l(w)-l(v)} . $$
 Under this isomorphism, the orbits for the natural $(\Cset^\times)^d$-action on $(\cp)^d$ match with the projected open Richardson varieties $\Pi_{v'}^{w'}$ for $v \leq v' \leq w' \leq w$.
  \eth
  \begin{proof} It follows from Propositions \ref{prop : equality} and \ref{p:kappaP-injective}, and Eq. \eqref{eq : stratif of proj Richardsons} that $\kappa_P$ defines a bijection between  
  $L_{\beta_{p_1}}/B_{\beta_{p_1}} \times \cdots \times L_{\beta_{p_d}}/B_{\beta_{p_d}}$ and $\overline{\Pi}_v^w$. 
  Analogously to the proof of Theorem \ref{t:kappa-isom}, 
  one shows that the differential of $\kappa_P$ is everywhere injective. This proves that $\kappa_P$ defines an isomorphism 
  \[
  L_{\beta_{p_1}}/B_{\beta_{p_1}} \times \cdots \times L_{\beta_{p_d}}/B_{\beta_{p_d}} \simeq \overline{\Pi}_v^w
  \]
  see e.g. \cite[Corollary 14.10]{Harris}. The last statement of the theorem follows from Proposition \ref{prop : equality}, Theorem \ref{t:P-Bruhat-interval} and Eq. \eqref{eq:projopenRich}.
  \end{proof}
\sectionnew{The dimension of reduced Poisson degeneracy loci}

As shown in Proposition \ref{p:irr-comp}, the reduced Poisson degeneracy locus $(G/B_+, \pi)_{\redd}$ of any full flag variety are isomorphic to copies of $(\cp)^d$ for integers $d$ that are not necessarily the same, see Examples \ref{ex:sl3-1} and \ref{ex:sl4-1}. In this section, we use Kostant's cascades of roots \cite{Kostant} to compute the top dimension of the ireducible components of the reduced Poisson degeneracy locus proving the following result: 

 \bth{thm : main thm dimension of X}
  For any connected simply connected complex semisimple Lie group $G$,
  the top-dimensional irreducible components of the reduced degeneracy locus $D_0(G/B_+, \pi)_{\redd}$ are isomorphic to $(\cp)^m$, where 
  \[
  m= l_{\Delta}(w_0) = \sharp \mathcal{B}
  \]
  and $\mathcal{B}$ is Kostant's cancade of roots of $\g$. 
 \eth

 The proof of this result splits into two main steps carried out in Sections~\ref{section : maximality} and~\ref{section : existence}. We first show that $l_{\Delta}(w_0)$ is an upper bound for the dimension of an irreducible component of $D_0(G/B_+, \pi)_{\redd}$ and then construct an irreducible component that has dimension equal to $l_{\Delta}(w_0)$. 
 
\subsection{Recollection on Kostant's cascade of roots}
\label{section : recollection on cascades}
We begin with a brief review of the components of Kostant's construction \cite{Kostant} of the cascade of roots of a semisimple Lie algebra $\g$. (Kostant's original formulation is for a simple Lie algebra $\g$, but the same method applies to semisimple Lie algebras.) For a positive root 
\[
\beta = \textstyle \sum_i k_i \alpha_i \in \Delta_+, \quad k_i \in \Nset,
\]
denote by $\Delta(\beta)$ the (indecomposable) root subsystem with simple roots
\[
\{ \alpha_i \mid 1 \leq i \leq n, k_i \neq 0 \}.  
\]
The root $\beta$ is called locally high if it is the highest root of $\Delta(\beta)$. Denote by $\g(\beta)$ the simple 
$\ad(\h)$-stable Lie subalgebra of $\g$ with root system $\Delta(\beta)$. The root subsystem of $\Delta(\beta)$ 
\[
\Delta(\beta)^\circ := \pm \{ \gamma \in \Delta(\beta) \mid (\gamma, \beta) = 0 \}  
\]
is in general decomposable. Its highest roots are call {\em{descendants}} of $\beta$. Denote also 
\[
E(\beta) := \{ \gamma \in \Delta(\beta) \mid (\gamma , \beta)>0\}.
\]

Kostant's cascade of roots of $\g$ is the collection $\mathcal{B}$ of all positive roots of $\g$ obtained 
as iterative descendants of the highest roots of the simple components of $\g$ (including the latter). The main property of this collection of roots is the following: 

\begin{theorem}[Kostant, \cite{Kostant}] \label{thm : Kostant}
The set $\mathcal{B}$ is a maximal set of pairwise strongly orthogonal positive roots containing the highest root of each simple component of $\mathfrak{g}$. Furthermore we have that 
$$ w_0 = \prod_{\gamma \in \mathcal{B}} s_{\gamma} \quad \text{and} \quad \Delta_{+} = \bigsqcup_{\gamma \in \mathcal{B}} E(\gamma). $$
\end{theorem}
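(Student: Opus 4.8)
The plan is to reduce to the case where $\g$ is simple (the cascade, the sets $E(\gamma)$, the set $\Delta_+$ and the element $w_0$ all decompose over the simple factors of $\g$) and then to argue by induction on $\dim\g$, at each step peeling off the highest root $\theta$. The inductive framework rests on the observation that $\Delta^{\circ}(\theta)=\{\gamma\in\Delta\mid(\gamma,\theta)=0\}$ is the root system of a semisimple Lie algebra $\m$ with $\dim\m<\dim\g$, that the descendants of $\theta$ are precisely the highest roots of the irreducible components of $\Delta^{\circ}(\theta)$, and that iterating the cascade construction inside $\Delta^{\circ}(\theta)$ reproduces Kostant's cascade $\mathcal{B}(\m)$ of $\m$; thus $\mathcal{B}(\g)=\{\theta\}\sqcup\mathcal{B}(\m)$.

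First I would record two elementary facts about the highest root. Since $\theta$ is dominant, $(\delta,\theta)\geq 0$ for every $\delta\in\Delta_+$, so $E(\theta)=\{\delta\in\Delta\mid(\delta,\theta)>0\}$ is contained in $\Delta_+$ and
\[
\Delta_+=E(\theta)\;\sqcup\;\bigl(\Delta_+\cap\Delta^{\circ}(\theta)\bigr).
\]
Second, using that $\theta$ is a long root maximal in the dominance order, $s_\theta$ fixes $\Delta^{\circ}(\theta)$ pointwise and carries $E(\theta)$ into $-\Delta_+$, so the inversion set of $s_\theta$ equals $E(\theta)$ and $l(s_\theta)=\#E(\theta)$. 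These facts also yield the strong orthogonality of $\theta$ to every root of $\Delta^{\circ}(\theta)$: if $\gamma\in\Delta_+\cap\Delta^{\circ}(\theta)$ and the $\gamma$-string through $\theta$ were nontrivial, then $\theta+\gamma\in\Delta$ would be a positive root exceeding $\theta$, which is impossible. Combining this with the inductive hypothesis for $\m$ and the fact that $s_\beta$ and $s_{\beta'}$ commute for orthogonal $\beta,\beta'$, I obtain that $\mathcal{B}(\g)$ is a set of pairwise strongly orthogonal roots; the same recursion gives its maximality (a positive root strongly orthogonal to all of $\mathcal{B}(\g)$ must be orthogonal to $\theta$, hence lies in $\Delta^{\circ}(\theta)$ and is strongly orthogonal to all of $\mathcal{B}(\m)$, contradicting maximality for $\m$), and it contains the highest root of every simple component of $\g$ by construction.

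For the product formula, put $w:=\prod_{\gamma\in\mathcal{B}(\g)}s_\gamma$; the factors commute, so $w$ is well defined, $w^2=1$, and $w=s_\theta\,w'$ where $w':=\prod_{\gamma\in\mathcal{B}(\m)}s_\gamma=w_0^{\m}$ by induction. Every root moved by $w'$ lies in $\Span\,\Delta^{\circ}(\theta)$, which is orthogonal to $\theta$, so $w'(\theta)=\theta$ and therefore $w'$ preserves the functional $(\,\cdot\,,\theta)$ and stabilizes $E(\theta)=\{\delta\in\Delta\mid(\delta,\theta)>0\}$. Since the inversion set $E(\theta)$ of $s_\theta$ is carried into $\Delta_+$ by $(w')^{-1}=w'$, lengths add and
\[
l(w)=l(s_\theta)+l(w')=\#E(\theta)+\#\bigl(\Delta_+\cap\Delta^{\circ}(\theta)\bigr)=\#\Delta_+,
\]
using the first displayed identity together with $l(w_0^{\m})=\#(\Delta_+\cap\Delta^{\circ}(\theta))$. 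An element of length $\#\Delta_+$ equals $w_0$, hence $w=w_0$. Finally, the partition $\Delta_+=\bigsqcup_{\gamma\in\mathcal{B}(\g)}E(\gamma)$ follows by combining $\Delta_+=E(\theta)\sqcup(\Delta_+\cap\Delta^{\circ}(\theta))$ with the inductive identity $\Delta_+\cap\Delta^{\circ}(\theta)=\bigsqcup_{\gamma\in\mathcal{B}(\m)}E(\gamma)$ for $\m$.

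The main obstacle, and the only place where genuine care is required, is justifying the recursion built into the inductive setup: one must check that for a cascade root $\beta$ arising as the highest root of an irreducible component $\Phi'$ of $\Delta^{\circ}(\theta)$, the subsystem $\Delta(\beta)$ generated by the simple roots of $\Delta$ occurring in $\beta$ coincides with $\Phi'$ — equivalently, with the analogous subsystem formed from the simple roots of $\m$ — so that $E(\beta)$ is computed the same way in $\g$ and in $\m$ and the recursions for $\mathcal{B}$ and for the partition are legitimate. I expect this to be handled by a uniform argument showing that the $\Delta$-simple roots occurring in $\beta$ span $\Span\,\Phi'$, this being exactly the combinatorial heart of Kostant's construction; alternatively it can be verified type-by-type using the classification of simple Lie algebras.
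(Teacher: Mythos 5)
The paper itself gives no proof of this statement: it is recalled with attribution to \cite{Kostant}, so there is no internal argument to compare against, and your inductive scheme (peel off the highest root $\theta$ of a simple factor and recurse into the subsystem $\Delta^\circ(\theta)$ of roots orthogonal to it) is the natural route — essentially Kostant's own. Most of your steps are sound: the splitting $\Delta_+=E(\theta)\sqcup(\Delta_+\cap\Delta^\circ(\theta))$, the strong orthogonality of $\theta$ to $\Delta^\circ(\theta)$ via the root-string symmetry, the identification of the inversion set of $s_\theta$ with $E(\theta)$ (using that $\theta$ is long, so $2(\delta,\theta)/\|\theta\|^2=1$ for $\delta\in E(\theta)\setminus\{\theta\}$, and that $\theta-\delta$ is a nonnegative combination of simple roots), the length-additivity for $s_\theta w'$ from the fact that $w'$ permutes $E(\theta)$, and the bookkeeping for maximality and for the partition.

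However, the step you flag at the end and leave as an ``expectation'' is a genuine gap, not a cosmetic one: without it the induction hypothesis cannot be invoked, because the cascade and the sets $E(\beta)$ are defined through the support subsystems $\Delta(\beta)$ taken with respect to the simple roots of $\g$, and you must know that for a descendant $\beta$ this subsystem coincides with the irreducible component of $\Delta^\circ(\theta)$ containing $\beta$; only then is the iterated-descendant construction inside $\g$ literally the cascade of $\m$, with $E(\beta)$ computed the same way in both. The uniform argument you hope for does exist and is short, with no case analysis: since $\theta$ is dominant, for a positive root $\beta=\sum_i k_i\alpha_i$ one has $(\beta,\theta)=\sum_i k_i(\alpha_i,\theta)$ with every term nonnegative, so $(\beta,\theta)=0$ forces $k_i=0$ whenever $(\alpha_i,\theta)>0$. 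Hence $\Delta^\circ(\theta)$ is exactly the standard parabolic subsystem on $S_0:=\{\alpha_i\mid(\alpha_i,\theta)=0\}$; its irreducible components are the parabolic subsystems on the connected components of $S_0$ (the support of any root is a connected subdiagram), so their simple roots are simple roots of $\g$; and since the highest root of an irreducible system has full support, $\Delta(\beta)$ equals the component containing $\beta$. Once you insert this observation, the recursion $\mathcal{B}(\g)=\{\theta\}\sqcup\mathcal{B}(\m)$ and the equality of the sets $E(\cdot)$ in $\g$ and $\m$ are justified, and your proof goes through.
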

By Lemma \ref{lem : refl length}, the reflective length of the longest element of $W$ is 
\[
l_\Delta(w_0) = \sharp \mathcal{B}.
\]

\begin{lemma}[Kostant, {{\cite[Propositions 1.1 and 1.10]{Kostant}}}] \label{lem : Kostant}
The following properties hold for all $\gamma \in \mathcal{B}$:
\begin{enumerate}
       \item[(i)] The set $E(\gamma)$ contains $\gamma$ and $\sharp E(\gamma) = 2h^{\vee}(\gamma)-3$, where $h^{\vee}(\gamma)$ denotes the dual Coxeter number of $\mathfrak{g}(\gamma)$.
       \item[(ii)] For every $\mu \in E(\gamma) \setminus \{ \gamma\}$, we have $2(\gamma,\mu)/ \| \gamma\|^2 = 1$.
       \item[(iii)] For every $\mu \in E(\gamma) \setminus \{\gamma\}$, there exists a unique $\nu \in E(\gamma) \setminus \{\gamma\}$ such that $\mu + \nu = \gamma$. 
\end{enumerate}
\end{lemma}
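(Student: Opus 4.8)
The plan is to reduce to the case where $\g$ is simple and $\gamma$ is its highest root, and then to establish (i)--(iii) by elementary root--system arguments together with the standard description of the dual Coxeter number. By construction, every $\gamma\in\mathcal{B}$ is locally high, i.e. $\gamma$ is the highest root of the simple Lie algebra $\g(\gamma)$ with root system $\Delta(\gamma)$ (\cite{Kostant}); and the set $E(\gamma)$, the quantities $\|\gamma\|^2$ and $(\gamma,\mu)$, the number $h^\vee(\gamma)$, and the relation $\mu+\nu=\gamma$ all refer only to $\Delta(\gamma)$. So I may assume that $\g$ is simple and that $\gamma$ is its highest root. Then $\gamma\in E(\gamma)$ since $(\gamma,\gamma)>0$, and since $\gamma$ is dominant one has $(\alpha,\gamma)\geq 0$ for every $\alpha\in\Delta_+$; hence $E(\gamma)\subseteq\Delta_+$ and every positive root $\neq\gamma$ either lies in $E(\gamma)$ or is orthogonal to $\gamma$.

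Next I would prove (ii). The highest root $\gamma$ is a long root of $\Delta$ (\cite{Bourbaki}), so for any root $\mu\neq\pm\gamma$ the Cartan integer $\langle\mu,\gamma^\vee\rangle:=2(\mu,\gamma)/\|\gamma\|^2$ lies in $\{-1,0,1\}$: indeed $|\langle\mu,\gamma^\vee\rangle|\leq|\langle\gamma,\mu^\vee\rangle|$ because $\|\gamma\|^2\geq\|\mu\|^2$, while the product $\langle\mu,\gamma^\vee\rangle\langle\gamma,\mu^\vee\rangle$ equals $4$ times the squared cosine of the angle between $\mu$ and $\gamma$ and is therefore at most $3$ for non-proportional roots, so $|\langle\mu,\gamma^\vee\rangle|\leq 1$. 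For $\mu\in E(\gamma)\setminus\{\gamma\}$ we have $(\mu,\gamma)>0$, $\mu\neq\gamma$ by assumption, and $\mu\neq-\gamma$ since $(-\gamma,\gamma)<0<(\mu,\gamma)$; hence the integer $\langle\mu,\gamma^\vee\rangle$ is positive and $\leq 1$, i.e. equals $1$. This is (ii), and it identifies $E(\gamma)\setminus\{\gamma\}$ with $\{\mu\in\Delta_+ : \langle\mu,\gamma^\vee\rangle=1\}$.

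For (iii), given $\mu\in E(\gamma)\setminus\{\gamma\}$ I would set $\nu:=\gamma-\mu$ and verify $\nu\in E(\gamma)\setminus\{\gamma\}$. Since $(\mu,\gamma)>0$ and $\mu\neq\gamma$, the difference $\mu-\gamma$ is a root, hence so is $\nu=\gamma-\mu$; since $\gamma$ is the highest root, $\gamma-\mu\in\sum_i\Nset\alpha_i$, so $\nu\in\Delta_+$; and $(\nu,\gamma)=\|\gamma\|^2-(\mu,\gamma)=\tfrac12\|\gamma\|^2>0$ by (ii), while $\nu\neq\gamma$ since $\mu\neq 0$. Uniqueness of $\nu$ with $\mu+\nu=\gamma$ is immediate. (Conceptually, (iii) reflects the fact that the weight-$1$ piece of the $5$-term grading of $\g(\gamma)$ by $\ad\gamma^\vee$ carries a symplectic form given by the bracket into the one-dimensional top piece $\g(\gamma)_\gamma$, but this will not be needed.)

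Finally I turn to (i), which is the main point: (ii) and (iii) are purely combinatorial, whereas the cardinality requires the correct characterization of $h^\vee(\gamma)$. I would invoke the standard identity $h^\vee(\gamma)=1+\langle\rho,\gamma^\vee\rangle$, where $\rho$ is the half-sum of the positive roots of $\g(\gamma)$ (equivalently, the Casimir of the adjoint representation acts by $2h^\vee(\gamma)$ in the normalization $\|\gamma\|^2=2$). Expanding and using (ii) together with the orthogonality of the remaining positive roots to $\gamma$,
\begin{align*}
\langle\rho,\gamma^\vee\rangle &= \tfrac12\sum_{\alpha\in\Delta_+}\langle\alpha,\gamma^\vee\rangle
= \tfrac12\Big(\langle\gamma,\gamma^\vee\rangle+\sum_{\mu\in E(\gamma)\setminus\{\gamma\}}\langle\mu,\gamma^\vee\rangle\Big) \\
&= \tfrac12\big(2+(\sharp E(\gamma)-1)\big) = \tfrac12\big(\sharp E(\gamma)+1\big),
\end{align*}
whence $\sharp E(\gamma)=2h^\vee(\gamma)-3$, which completes (i). As a consistency check, (iii) partitions $E(\gamma)\setminus\{\gamma\}$ into two-element blocks $\{\mu,\gamma-\mu\}$ (note $\mu\neq\gamma-\mu$, since $2\mu\notin\Delta$), so $\sharp E(\gamma)$ is odd as it must be. If one preferred to avoid the cited identity for $h^\vee$, an alternative is to identify the weight-$1$ space of the above grading with the known contact-grading module of $\g(\gamma)$ and read off its dimension, but the $\rho$-computation is shorter.
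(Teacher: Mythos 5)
Your argument is correct, and it is worth noting that the paper itself gives no proof of this lemma: it is quoted verbatim from Kostant's paper (Propositions 1.1 and 1.10 of \cite{Kostant}), so there is no internal proof to compare against. Your reduction to the case where $\g$ is simple and $\gamma$ is its highest root is the right move, and it rests on the one fact you import from Kostant, namely that every cascade element is locally high (i.e.\ is the highest root of $\g(\gamma)$); given that, parts (ii) and (iii) are exactly the standard root-system facts you state (the highest root is long and dominant, so $\langle\mu,\gamma^\vee\rangle\in\{0,1,2\}$ for $\mu\in\Delta_+(\gamma)$ with value $2$ only at $\mu=\gamma$; and $(\mu,\gamma)>0$, $\mu\neq\gamma$ forces $\gamma-\mu\in\Delta_+(\gamma)$, which pairs with $\mu$), and your checks that $\nu=\gamma-\mu$ lies in $E(\gamma)\setminus\{\gamma\}$ and is unique are complete. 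Part (i) via $h^{\vee}(\gamma)=1+\langle\rho,\gamma^\vee\rangle$ and the splitting of $\Delta_+(\gamma)$ into $\{\gamma\}$, $E(\gamma)\setminus\{\gamma\}$ and the roots orthogonal to $\gamma$ gives $\langle\rho,\gamma^\vee\rangle=\tfrac12(\sharp E(\gamma)+1)$ and hence $\sharp E(\gamma)=2h^{\vee}(\gamma)-3$, which is a clean and correct derivation; the only external inputs are the local highness of cascade roots and the $\rho$-characterization of the dual Coxeter number, both standard and appropriately attributed, so the proposal stands as a legitimate self-contained proof of the cited statement.
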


Given $\gamma \in \mathcal{B}$, the pairs $(\mu, \nu) \in E(\gamma) \setminus \{\gamma\}$ such that $\mu + \nu = \gamma$ are called {\em{Heisenberg pairs}}, and two positive roots $\mu$ and $\nu$ forming such a pair are called {\em{Heisenberg twins}}; automatically $\mu \neq \nu$. The following observation will be useful to us. 

 \ble{lem : convexity and cascade}
     Let $\gamma \in \mathcal{B}$ and $\Psi$ be a convex subset of $\Delta_+$ (see Section~\ref{subsection : W gp and roots}) such that $\gamma \notin \Psi$. Then the following hold:
      \begin{enumerate}
          \item[(i)] The set $\Psi$ contains at most one element of any Heisenberg pair in $E(\gamma)$. 
          \item[(ii)] If moreover one has $\Psi \subset E(\gamma) \setminus \{\gamma \}$ and $\sharp \Psi = h^{\vee}(\gamma)-2$, then $\Psi$ consists of exactly one element of each Heisenberg pair in $E(\gamma)$. 
      \end{enumerate}
 \ele

\begin{proof}
 Assume $\Psi$ contains two Heisenberg twins $\mu$ and $\nu$ in $E(\gamma)$. Then by convexity of $\Psi$ we have that 
 $$ \gamma = \mu + \nu \in \Psi $$
 which contradicts our assumption. Thus (i) holds. If moreover $\Psi \subset E(\gamma) \setminus \{\gamma \}$ and $\sharp \Psi = h^{\vee}(\gamma)-2$, then Lemma~\ref{lem : Kostant} (i) implies that $\sharp \Psi$ is equal to the number of Heisenberg pairs in $E(\gamma)$, and thus $\Psi$ contains exactly one element of each Heisenberg pair in $E(\gamma)$ which proves (ii). 
 \end{proof}

     \subsection{Maximality of $l_{\Delta}(w_0)$}
     \label{section : maximality}

     In this subsection we prove the following property of the reflection lengths of involutive elements of $W$. 

     \bpr{prop : maximality}
 Let $w \in W$ be such that $w^2=1$. Then one has 
 \[
 l_{\Delta}(w) \leq l_{\Delta}(w_0).
 \]
     \epr

 \begin{proof}
 We consider several cases according to classical properties of the longest element of finite type Weyl groups. We will be using the labeling of simple roots from {{\cite[Chap. VI, §4, no.5-13]{Bourbaki}}}. 

 Assume $\mathfrak{g}$ is of one of the following types: $B_n, n \geq 1, C_n, n \geq 1, D_n, n \geq 4$ with $n$ even, $E_7, E_8, F_4$ or $G_2$. In all these cases $w_0$ is known to be equal to $-1$, see e.g. \cite[Chap. VI, \S4]{Bourbaki}. Hence $\dim \Ker(1-w_0)=0$ which implies $\l_{\Delta}(w_0)=n$ by Lemma~\ref{lem : refl length}. In particular, for any $w \in W$, Lemma~\ref{lem : refl length} yields $l_{\Delta}(w) \leq n = l_{\Delta}(w_0)$. 
 
  Assume that $\mathfrak{g}$ is of type $D_n$ with $n \geq 4$ odd. Then $w_0$ is equal to the involution acting on $\Pi$ as follows:
  $$ w_0(\alpha_n) = - \alpha_{n-1} \quad w_0(\alpha_{n-1}) = - \alpha_n \quad w_0(\alpha_i)=-\alpha_i, 1 \leq i \leq n-2 . $$
  Therefore, $\dim \Ker(1-w_0)=1$ and thus $l_{\Delta}(w_0)=n-1$ by Lemma~\ref{lem : refl length}. Assume there exists $w \in W$ such that $w^2=1$ and $l_{\Delta}(w) > l_{\Delta}(w_0)$. Then necessarily $l_{\Delta}(w) = n$ and hence $\dim \Ker(1-w)=0$ by Lemma~\ref{lem : refl length}. As $w$ is an involution, this implies that $w=-1$. In particular we have that $-1 \in W$ which is known to be false for $\mathfrak{g}$ of type $D_n$ with $n$ odd, see \cite[Chap. VI, \S4, no.8]{Bourbaki}).

  Assume next that $\mathfrak{g}$ is of type $A_n , n \geq 1$. In this case $w_0$ is the involution  acting on $\Pi$ as follows:
  $$ w_0(\alpha_i) = - \alpha_{i^{*}} \qquad \text{where $i^{*} := n-i+1$ for each $1 \leq i \leq n$.} $$ 
  Hence we get $\dim \Ker(1+w_0) = \lceil n/2 \rceil$. Let us now fix $w \in W$ such that $w^2=1$. Viewing $w$ as a permutation of the set $\{1, \ldots , n+1\}$, we can write $w$ in a unique way as a product of disjoint cycles. Because $w^2=1$, these cycles have to be transpositions, and as their supports are disjoint, there can be at most $\lceil n/2 \rceil$ of them. Thus we have that $w$ can be written as a product of at most $l_{\Delta}(w_0)$ pairwise commuting reflections, and hence $l_{\Delta}(w) \leq l_{\Delta}(w_0)$.

  Assume finally that $\mathfrak{g}$ is of type $E_6$. Then $w_0$ is the involution acting on simple roots as 
   $$ w_0(\alpha_1)=-\alpha_6 \quad w_0(\alpha_3)=-\alpha_5 \quad w_0(\alpha_4)=-\alpha_4 \quad w_0(\alpha_2)=-\alpha_2 . $$
   Hence one has $\dim \Ker(1-w_0)=2$ and thus $l_{\Delta}(w_0)=4$ by Lemma~\ref{lem : refl length}. Thus we shall prove that any involution in $W$ has reflection length at most $4$.  
   By Lemma~\ref{lem : refl length}, $w$ can be written as a product of reflections associated to pairwise orthogonal roots, and up to conjugating (which does not change the reflection length) we may assume that one of them is the highest root $\theta$. The subset of roots orthogonal to $\theta$ can be identified with a type $A_5$ root system, see e.g. \cite[Chap. VI, \S4, no.12]{Bourbaki}. Hence, by what has been proved above in type $A_n$, we get that a collection of pairwise orthogonal roots that are all orthogonal to $\theta$ is necessarily of cardinality at most $3$ (as by Lemma~\ref{lem : refl length}, the cardinality of such collection of roots is the reflection length of the product of the corresponding reflections). Thus we obtain that $w$ is of reflection length at most $4$. 
\end{proof}

\bco{coro : maximality}
    For any pair $(v,w) \in \GCR(W)$, the Richardson variety $\overline{\mathcal{R}_v^w}$ is of dimension at most $l_{\Delta}(w_0)$. 
\eco

 \begin{proof}
Let $v,w \in \GCR(W)$. Condition (4) in Theorem \ref{t:v-w-equiv} implies that $(v w^{-1})^2 =1$ and 
\[
l_\Delta(v w^{-1}) = l(w) - l(v) = \dim \Rvw.
\]
Applying Proposition~\ref{prop : maximality}, we
obtain 
\[
\dim \Rvw = l_\Delta(v w^{-1}) \leq l_\Delta(w_0).
\]
 \end{proof}

\subsection{Existence of an irreducible component of the reduced Poisson degeneracy locus of dimension $l_{\Delta}(w_0)$}
   \label{section : existence}
   
 The rest of this section will be devoted to proving the existence of a pair 
 \[
 (v,w) \in \GCR(W) \quad \mbox{such that} 
 \quad l(w)-l(v)=l_{\Delta}(w_0)  
 \]
 which completes the proof of Theorem \ref{thm : main thm dimension of X}. 
 We begin by providing a simple sufficient condition on $v$ guaranteeing the existence of $w$ such that the pair $(v,w)$ fulfills these requirements. In all what follows we set 
 \[
 m := l_{\Delta}(w_0), \quad N:= l(w_0) = \sharp \Delta_+.
 \]
   
    \ble{lem : conditions on v}
   Let $v \in W$. Assume that $l(v) = (N-m)/2$ and $\mathcal{B} \cap \Delta_+^v = \varnothing$. Set $w := w_0v$. Then, $(v,w) \in \GCR(W)$ and $l(w)-l(v)=m$. 
     \ele
     
      \begin{proof}
 Since $l(w_0v)=N-l(v)$, we have 
     $$ l(w)-l(v)  = N-2l(v) = N-(N-m)=m. $$
     
To prove that $(v,w) \in \GCR(W)$, we will show that the pair $(v,w)$ satisfies condition (5) in Theorem \ref{t:v-w-equiv}. Firstly, from
     Theorem~\ref{thm : Kostant} we get $w = \prod_{\beta \in \mathcal{B}} s_{\beta} v$ with the roots in $\mathcal{B}$ being pairwise orthogonal. Moreover, by Lemma~\ref{lem : refl length} we have that $m= \sharp \mathcal{B}$, and hence $l(w)-l(v)= m= \sharp \mathcal{B}$. In order to complete the proof, it remains to show that 
     $v \leq w$. Since $w=w_0v$, we have $\Delta_+^w= \Delta_+ \setminus \Delta_+^v$. In particular our assumptions on $v$ imply $\mathcal{B} \subseteq \Delta_+^w$. Choose a reduced word $(i_1, \ldots, s_{i_l})$, where $l := l(w)= (N+m)/2$. There exit indices $1 \leq p_1 < p_2 < \cdots < p_m \leq l$ such that 
    $$ \mathcal{B} = \{ \beta_{p_k} \mid 1 \leq k \leq m \} $$
    in the notation \eqref{eq:beta-k}. 
  Using Lemma~\ref{l:Bruhat} and Theorem~\ref{thm : Kostant} we obtain
\begin{align*}
v = w_0 w &= 
\Big( \textstyle \prod_{\gamma \in \mathcal{B}} s_{\gamma} \Big) s_{i_1} \ldots s_{i_l} 
\\
&=\big( s_{\beta_{p_1}} \ldots s_{\beta_{p_m}} \big) s_{i_1} \ldots s_{i_l}
= s_{i_1} \ldots  \widehat{s}_{i_{p_1}} \ldots \widehat{s}_{i_{p_m}} \ldots s_{i_l}, 
\end{align*}
which shows that $v \leq w$. 
\end{proof}

 The rest of the proof of Theorem \ref{thm : main thm dimension of X} amounts to constructing an element $v \in W$ satisfying the conditions of Lemma~\ref{lem : conditions on v}. More precisely, we will construct an element $v$ such that $\Delta_+^v$ contains exactly one element of each Heisenberg pair in $\Delta_+ \setminus \mathcal{B}$.

\bpr{prop : constructing v for theta} Let $\g$ be a complex simple Lie algebra and $\theta$ be its highest root. 
There exists $u \in W$ such that $\Delta_+^u$ contains exactly one member of each Heisenberg pair in $E(\theta) \setminus \{ \theta \}$. 
 \epr

 \begin{proof} We will construct a Weyl group element $u$ such that 
 \begin{equation}
 \label{eq:dualCoxeter}
 l(u) = h^{\vee}-2 
 \quad \mbox{and} \quad 
 \Delta_+^u \subset E(\theta)\backslash \{ \theta\}.
 \end{equation}
 Since $\sharp \Delta_+^u = l(u)$, Lemma~\ref{lem : convexity and cascade} implies that for this $u \in W$, $\Delta_+^u$ contains exactly one element of every Heisenberg pair in $E(\theta)\backslash \{ \theta\}$.
We provide a uniform proof when $\g$ is of simply laced type. The non-simply laced cases will be dealt with on a case by case investigation. 

 Assume that $\g$ is simply laced and recall from Section \ref{sec:background-Lie} that $n$ denotes the rank of $\g$. Recall also that $\mathrm{ht}(\theta) = h-1$, where $h$ denotes the Coxeter number of $\g$ (see for instance  {{\cite[Chap. VI, §1, Proposition 31]{Bourbaki}}}). 
 We will show that there exists a sequence of indices $1 \leq i_1, \ldots , i_{h-2} \leq n$ such that 
 $$ s_{i_k} \ldots s_{i_1}(\theta) = \theta - (\alpha_{i_1} + \cdots + \alpha_{i_k}) $$
 for every $1 \leq k \leq h-2$. We argue by induction on $k$. 
  Obviously, there exists $1 \leq i_1 \leq n$ such that $\theta$ is not orthogonal to the simple root $\alpha_{i_1}$. Then $\alpha_{i_1} \in E(\theta)$, and hence, $s_{i_1}(\theta) = \theta - \alpha_{i_1}$ by Lemma~\ref{lem : Kostant}(ii). Assume that $i_1, \ldots , i_k$ have been constructed as required for some $k < h-2$, and denote $\beta := s_{i_k} \ldots s_{i_1}(\theta)$. We have $\mathrm{ht}(\beta)= h-k-1 >1$ so that $\beta$ is not a simple root. In particular, $(\alpha_i, \beta) \neq  \pm 2$ and hence
\[  
  (\alpha_i, \beta) \in \{-1,0,1\}
\]  
for all $1 \leq i \leq n$. 
As $\|\beta \|>0$, there exists $1 \leq i \leq n$ such that $\alpha_i \in \Delta_+(\beta)$ and $(\alpha_i , \beta)>0$. Denoting $i_{k+1}:=i$, we then have $(\alpha_{i_{k+1}}, \beta) = 1$ so that $s_{i_{k+1}}(\beta) = \beta - \alpha_{i_{k+1}}$. 

   Set $u := s_{i_1} \ldots s_{i_{h-2}}$. For $1 \leq k \leq h-2$,
   $$  \big( \theta, s_{i_1} \ldots s_{i_{k-1}}(\alpha_{i_k}) \big) = \big( s_{i_{k-1}} \ldots s_{i_1}(\theta), \alpha_{i_k}  \big)  >0 $$
   by the construction of $i_1, \ldots , i_{h-2}$. This implies that $s_{i_1} \ldots s_{i_{k-1}}(\alpha_{i_k}) \in \Delta_+$
    because for $\gamma \in \Delta$, we have  $(\theta, \gamma)>0$ $\Leftrightarrow$  $\gamma \in \Delta_+$. Thus $s_{i_1} \ldots s_{i_{k-1}}(\alpha_{i_k}) \in \Delta_+$ for each $1 \leq k \leq h-2$ hence $(i_1, \ldots, i_{h-2})$ is a reduced word.

     The above shows that $(\theta, \beta)>0$ for all $\beta \in \Delta_+^u$. In other words $\Delta_+^u \subset E(\theta)$. Moreover, if $\theta$ was in $\Delta_+^u$, then there would exist $1 \leq k \leq h-2$ such that $\theta = s_{i_1} \ldots s_{i_{k-1}}(\alpha_{i_k})$ so that $s_{i_{k-1}} \ldots s_{i_1}(\theta) \in \Pi$ which is impossible as $\mathrm{ht}(s_{i_{k-1}} \ldots s_{i_1}(\theta)) = \mathrm{ht}(\theta)-k+1 > 1$. Hence $\theta \notin \Delta_+^u$, so $\Delta_+^{u} \subset E(\theta) \setminus \{ \theta\}$. 
     Since $l(u) = h-2 = h^{\vee}-2$, $u$ satisfies \eqref{eq:dualCoxeter}.
     \smallskip

    If $\g$ is of type $C_n, n \geq 1$, set $u := s_1 \ldots s_{n-1}$. The word $(1, \ldots, n-1)$ is reduced of length $n-1 = h^{\vee}-2$ and
    $$ \Delta_+^u = \{ \alpha_1 + \cdots + \alpha_k , 1 \leq k < n \}$$
    from which one easily checks that $\Delta_+^u \subset E(\theta) \setminus \{ \theta\}$, so $u$ satisfies \eqref{eq:dualCoxeter}.

     If $\g$ is of type $B_n, n \geq 1$, set $u = s_2 \ldots s_ns_1 \ldots s_{n-2}$. The word $(2, \ldots, n, 1, \ldots, n-2)$ is reduced of length $2n-3 = h^{\vee}-2$ and
    $$ \Delta_+^u = \{ \alpha_1 + \cdots + \alpha_j , 1<j<n \} \sqcup \{ \alpha_2 + \cdots \alpha_j , 2 \leq j \leq n \}.$$
    It is straightforward to check that $\Delta_+^u \subset E(\theta) \setminus \{ \theta\}$, so $u$ satisfies \eqref{eq:dualCoxeter}. 

    If $\g$ is of type $G_2$, set $u=s_1s_2$ so that $\Delta_+^u = \{\alpha_1, \alpha_1+ \alpha_2\} \subset E(\theta) \backslash \{ \theta \}$ and $u$ satisfies \eqref{eq:dualCoxeter}.

    If $\g$ is of type $F_4$, sets $u = s_1s_2s_3s_4s_2s_3s_1$; $l(u) = 7 = h^{\vee}-2$ and
    $$ \Delta_+^u = \{ 1000, 1100, 1110, 1111, 1120, 1121, 1220 \}, $$
    where $abcd$ stands for $a \alpha_1 + b \alpha_2 + c \alpha_3 + d \alpha_4$. Again, one easily checks that $\Delta_+^u \subset E(\theta) \setminus \{ \theta\}$, which show that $u$ 
    staisfies \eqref{eq:dualCoxeter}.
 \end{proof}

 \bpr{prop: existence}
For each semisimple Lie algebra $\mathfrak{g}$, there exists $v \in W$ such that $\Delta_{+}^v$ consists of one element of each Heisenberg pair in $\Delta_+$. 
 \epr

 \begin{proof}
 Let us fix a simple component $\mathbf{g}$ of $\mathfrak{g}$ and denote $\Delta := \Delta(\mathbf{g}), \Delta_+ := \Delta_+(\mathbf{g})$. Let $\theta$ denote the highest root of $\mathbf{g}$ and $\mathcal{B} := \{ \gamma_1, \ldots , \gamma_m \}$ be the cascade of roots of $\mathbf{g}$, with $\gamma_1 = \theta$, the highest root of $\mathbf{g}$. Let $u$ denote the element of the Weyl group of $\mathbf{g}$ provided by Proposition~\ref{prop : constructing v for theta}. We first prove the following :

  \ble{lem : cascade for g'}
  Let $\mathfrak{g}'$ denote the Lie subalgebra of $\mathbf{g}$ generated by the elements $h_{\beta}, e_{ \pm \beta}$ for all $\beta \in \Delta_+$ such that $ \beta \bot u^{-1}(\theta)$. Then $\g'$ is a semisimple Lie algebra and the set $\{u^{-1}(\gamma_k) , 2 \leq k \leq m \}$ is the cascade of roots of $\g'$.  
\ele
 
 \begin{proof}
 The set $\Delta'$ defined as 
 $$ \Delta' := \{ \beta \in \Delta \mid \beta \bot u^{-1}(\theta) \} $$
 defines a root system in the subspace of $\mathfrak{t}^{*}$ generated by the elements of 
 $$ \Pi' := \{ \beta \in \Delta_+ \cap \Delta' \mid \forall \gamma \in \Delta_+, \gamma < \beta \Rightarrow (\gamma , u^{-1}(\theta)) \neq 0 \} . $$
  Therefore the Lie subalgebra $\mathfrak{g}'$ of $\mathbf{g}$ is semisimple and its sets of roots, positive roots and simple roots are respectively given by  $\Delta'$, $\Delta'_+  := \Delta_+ \cap \Delta'$ and $\Pi'$. Moreover we have that 
  \begin{equation} \label{eq : inclusion}
 u(\Delta'_+) \subset \Delta_+.
  \end{equation} 
  Indeed, if $\beta \in \Delta'_+$ is such that $u(\beta) \in -\Delta_+$ then $u(\beta) \in (-\Delta_+) \cap u(\Delta_+) = - \Delta_{+}^{u} \subset - E(\theta)$ by the construction of $u$, and hence $(u(\beta), \theta) <0$, which contradicts $\beta \bot u^{-1}(\theta)$. 
  
Denote by $\mathcal{B}' := \{ \gamma'_1, \ldots , \gamma'_{m'}\}$ the cascade of roots of $ \mathfrak{g}'$ and by $E'(\gamma'_i)$ the corresponding sets as defined in Section~\ref{section : recollection on cascades}. 
We now prove that 
$$ \mathcal{B}' \subseteq u^{-1}(\mathcal{B} \setminus \{ \theta \}).$$
Assume that $\gamma'_{k_1} \notin u^{-1}( \mathcal{B} \setminus \{ \theta \})$ for some $1 \leq k_1 \leq m'$. Thus $u(\gamma'_{k_1}) \notin \mathcal{B}$; it cannot be equal to $\theta$ since $\gamma'_{k_1} \bot u^{-1}(\theta)$ by definition. Moreover, from \eqref{eq : inclusion} we obtain $u(\gamma'_{k_1}) \in \Delta_+$, so $u(\gamma'_{k_1}) \in \Delta_+ \setminus \mathcal{B}$. Hence by Theorem~\ref{thm : Kostant}, we have
 $$ u(\gamma'_{k_1}) + \nu_1 = \gamma_{l_1}  $$
for some $l_1 \geq 2$ ($l_1 \neq 1$ because $u(\gamma'_{k_1})$ is orthogonal to $\theta$ and thus cannot belong to $E(\theta)$) where $\nu_1 \in \Delta_+$ is the Heisenberg twin of $u(\gamma'_{k_1})$. This yields
 \begin{equation} \label{eq : Heisenberg1}
 \gamma'_{k_1} + u^{-1}(\nu_1) = u^{-1}(\gamma_{l_1}), 
 \end{equation}
 and as $l_1 \neq 1$, we have that $E(\gamma_{l_1}) \cap \Delta_+^{u} = \varnothing$ because $\Delta_+^{u} \subset E(\gamma_1)$ by the construction of $u$.  Hence $u^{-1}(E(\gamma_{l_1})) \subset \Delta_+$, so $u^{-1}(\nu_1)$ and $u^{-1}(\gamma_{l_1})$ belong to $\Delta_+$. 

 In particular, we have that $u^{-1}(\gamma_{l_1})-\gamma'_{k_1} \in \Delta$, which implies that $u^{-1}(\gamma_{l_1}) \notin \mathcal{B}'$ because the elements of $\mathcal{B}'$ are strongly orthogonal, see Theorem~\ref{thm : Kostant}. So $u^{-1}(\gamma_{l_1})$ is a positive root; it is orthogonal to $u^{-1}(\theta)$ because $(\gamma_{l_1}, \theta)=0$ (recall that $l_1 \neq 1$)
 and $u^{-1}(\theta) \notin \mathcal{B}'$. In other words, we have 
 $$ u^{-1}(\gamma_{l_1}) \in \Delta'_+ \setminus \mathcal{B}', $$
 so similarly as above, we have 
 \begin{equation} \label{eq : Heisenberg2}
  u^{-1}(\gamma_{l_1}) + \delta_1 = \gamma'_{k_2} 
  \end{equation}
  for some $k_2 \in \{1 , \ldots , m'\}$, where $\delta_1 \in \Delta'_+$ is the Heisenberg twin of $u^{-1}(\gamma_{l_1})$ in $E'(\gamma'_{k_2})$. In particular combining~\eqref{eq : Heisenberg2} with~\eqref{eq : Heisenberg1} we get 
 $$ \gamma'_{k_1} + u^{-1}(\nu_1) + \delta_1 = \gamma'_{k_2} $$
 so that $\gamma'_{k_1} < \gamma'_{k_2}$. Moreover, $\gamma'_{k_2} \notin u^{-1}(\mathcal{B})$ because $\gamma'_{k_2} - u^{-1}(\gamma_{l_1}) \in \Delta$ so this would contradict the strong orthogonality of the elements of $\mathcal{B}$. Therefore we can repeat the same arguments and eventually we end up constructing a sequence $k_1, k_2, \ldots $ with $\gamma'_{k_1} < \gamma'_{k_2} < \cdots $ which is a contradiction as $\mathcal{B}'$ is finite. 
 This proves that $\mathcal{B}' \subseteq u^{-1}(\mathcal{B} \setminus \{ \theta \})$. By the maximality of $\mathcal{B}'$ (see Theorem~\ref{thm : Kostant}), this inclusion must be an equality, which proves the lemma. 
 \end{proof}
 
By induction, there exists an element $v' \in W' := W(\mathfrak{g}')$ such that $\Delta_+^{v'}$ consists of one element of each Heisenberg pair in $\Delta'_+$. We now prove the following: 

 \ble{lem : induction for H pairs}
The set $u(\Delta_{+}^{v'})$ consists of one element of each Heisenberg pair in $\Delta_+ \setminus E(\theta)$. 
\ele

\begin{proof}
 Let $\mu \in \Delta_{+}^{v'}$ and $\nu$ be its Heisenberg twin in $\Delta'_+$. By Lemma~\ref{lem : cascade for g'}, we have $k \geq 2$ such that $\mu + \nu = u^{-1}(\gamma_k)$ and thus $u(\mu) + u(\nu) = \gamma_k$. Moreover, by~\eqref{eq : inclusion}, both $u(\mu)$ and $u(\nu)$ belong to $\Delta_+$, so they are Heisenberg twins in $E(\gamma_k)$. Hence $u(\mu)$ is an element of a Heisenberg pair in $\Delta_+ \setminus E(\theta)$. 

  Conversely, given a Heisenberg pair $(\mu , \nu)$ in $E(\gamma_k)$, for any $k \geq 2$  we have that $(\theta,\mu)=(\theta,\nu)=0$ and hence $u^{-1}(\mu)$ and $u^{-1}(\nu)$ are in $\Delta'$. In fact, they are in $\Delta'_+$. Indeed, 
  $$ u^{-1}(\mu) \in  u^{-1} \left( \Delta_+ \setminus E(\theta) \right)  \subset u^{-1} \left( \Delta_+ \setminus \Delta_{+}^u \right) \subset \Delta_+, $$
  and similarly for $\nu$. As $u^{-1}(\mu) + u^{-1}(\nu) = u^{-1}(\gamma_k)$, we get that $(u^{-1}(\mu),u^{-1}(\nu))$ is a Heisenberg pair in $E'(u^{-1}(\gamma_k))$. Therefore, one element of this pair say $u^{-1}(\mu)$ belongs to $\Delta_{+}^{v'}$ which yields $\mu \in u(\Delta_{+}^{v'})$. This proves that at least one element of the Heisenberg pair $(\mu, \nu)$ belongs to $u(\Delta_{+}^{v'})$, and this holds for each pair in $E(\gamma_k)$ for any $2 \leq k \leq m$. 
  
   Finally, $u(\Delta_{+}^{v'})$ is convex and by Lemma~\ref{lem : cascade for g'}, $u^{-1}(\gamma_k) \in \mathcal{B}'$ for each $k \geq 2$, so  $\gamma_k \notin  u(\Delta_{+}^{v'})$ as $\Delta_{+}^{v'} \cap \mathcal{B}' = \varnothing$ (by construction of $v'$). Hence Lemma~\ref{lem : convexity and cascade}(i) implies that  $u(\Delta_{+}^{v'})$ contains at most one element of each Heisenberg pair in $E(\gamma_k)$, and this holds for each $k \geq 2$ so we obtain that $u(\Delta_{+}^{v'})$   cannot contain two Heisenberg twins in $\Delta_+ \setminus E(\theta)$.
   This finishes the proof of the lemma.
\end{proof}

   To conclude the proof of Proposition \ref{prop: existence}, we set $v := uv'$. We have that 
$$ \Delta_{+}^{v} = \Delta_{+}^u \sqcup u(\Delta_{+}^{v'}) $$
where the union is disjoint because 
$$ u(\Delta_{+}^{v'}) \subset \Delta_+ \setminus E(\theta) \quad \text{and} \quad \Delta_{+}^u \subset E( \theta) $$
by Lemma~\ref{lem : induction for H pairs} and by the construction of $u$. Furthermore, $\Delta_{+}^{u}$ (resp. $u(\Delta_{+}^{v'})$) contains exactly one element of each Heisenberg pair in $E(\theta) = E(\gamma_1)$ (resp. in $E(\gamma_2) \sqcup \ldots \sqcup E(\gamma_m)$). So we get that $\Delta_{+}^{v}$ contains exactly one element of each Heisenberg pair in $\Delta_+(\mathbf{g})$.

Therefore, denoting by $\mathbf{g}_1, \ldots , \mathbf{g}_k$ the simple components of $\mathfrak{g}$, we obtain for every $1 \leq i \leq k$ an element $v_i \in W(\mathbf{g}_i)$ such that $\Delta_{+}^{v_i}$ contains exactly one element of each Heisenberg pair in $\Delta_+(\mathbf{g}_k)$. 
Hence setting $v := v_1 \ldots v_k$, we have that $v \in W(\mathfrak{g})$ and $\Delta_{+}^v = \bigsqcup_i \Delta_{+}^{v_i}$ contains exactly one element of each Heisenberg pair in $\Delta_+$.
\end{proof}

We can now prove the main result of this section. 

 \begin{proof}[Proof of Theorem~\ref{thm : main thm dimension of X}]
Let $v$ denote the element of $W$ constructed in Proposition~\ref{prop: existence}. In particular, $\Delta_{+}^v \subset \bigsqcup_{\gamma \in \mathcal{B}} E(\gamma) \setminus \{ \gamma \}$, so $ \Delta_{+}^v \cap \mathcal{B} = \varnothing $. Moreover, as $\Delta_{+}^v$ consists of one element of each Heisenberg pair in $\Delta_+$, its cardinality is equal to the number of Heisenberg pairs, and hence we get
$$ l(v) = \sharp \Delta_{+}^v = \frac{1}{2} \sharp \bigsqcup_{\gamma \in \mathcal{B}} E(\gamma) \setminus \{ \gamma \} = \frac{1}{2} \sharp \left( \Delta_+ \setminus \mathcal{B} \right) = \frac{N-m}{2} $$
where $N = \sharp \Delta_+$ and $m = \sharp \mathcal{B}$.
 Therefore Lemma~\ref{lem : conditions on v} implies that the pair $(v,w)$ with $w := w_0v$ satisfies $v \leq w$ and $l(w)-l(v)=m$. Theorem~\ref{t:kappa-isom} implies that $\Rvw \simeq  ( \cp )^m$. The theorem now folows from Propositions \ref{p:irr-comp} and \ref{prop : maximality}.
 
 \end{proof}
 
\end{document}